
\documentclass[12pt,twoside]{article}
\usepackage{amsfonts,amsmath,amssymb}
\usepackage{geometry}

\setcounter{MaxMatrixCols}{10}

\setlength{\textheight}{21cm}
\setlength{\textwidth}{15cm}
\newtheorem{theorem}{Theorem}[section]

\newtheorem{definition}{Definition}[section]

\newtheorem{lemma}{Lemma}[section]

\newtheorem{remark}[theorem]{Remark}

\numberwithin{equation}{section}
\newenvironment{proof}[1][Proof]{\noindent\textbf{#1.} }{\ \rule{0.5em}{0.5em}}
\input{tcilatex}
\begin{document}

\title{{\small ON GENERALIZATION OF SOME INEQUALITIES OF CHEBYSHEV'S
FUNCTIONAL USING GENERALIZED KATUGAMPOLA FRACTIONAL INTEGRAL}}
\author{{\small Tariq A. Aljaaidi, Deepak B. Pachpatte}}
\date{}
\maketitle

\begin{abstract}
In this paper we obtain a generalization of some integral inequalities
related to Chebyshev`s functional by using a generalized Katugampola
fractional integral.
\end{abstract}

\bigskip

\textbf{Keywords}: Chebyshev's functional; generalized fractional integral;
generalized fractional derivative.

\section{\protect\small \ INTRODUCTION:}

\ \ \ In (1882), Chebyshev \cite{FF} has given the following functional%
\begin{equation}
T\left( \varphi ,\psi \right) :=\frac{1}{b-a}\int_{a}^{b}\varphi \left(
\gamma \right) \psi \left( \gamma \right) d\gamma -\frac{1}{b-a}\left(
\int_{a}^{b}\varphi \left( \gamma \right) d\gamma \right) \left(
\int_{a}^{b}\psi \left( \gamma \right) d\gamma \right)   \label{fun1}
\end{equation}%
and its extention is obtained by Mitrinovic as (see \cite{oo})%
\begin{eqnarray}
T\left( \varphi ,\psi ,g,h\right)  &:&=\int_{a}^{b}h\left( \gamma \right)
d\gamma \int_{a}^{b}\varphi \left( \gamma \right) \psi \left( \gamma \right)
g\left( \gamma \right) d\gamma +\int_{a}^{b}g\left( \gamma \right) d\gamma
\int_{a}^{b}\varphi \left( \gamma \right) \psi \left( \gamma \right) h\left(
\gamma \right) d\gamma   \notag \\
&&-\left( \int_{a}^{b}\varphi \left( \gamma \right) h\left( \gamma \right)
d\gamma \right) \left( \int_{a}^{b}\psi \left( \gamma \right) g\left( \gamma
\right) d\gamma \right)   \label{fun2} \\
&&-\left( \int_{a}^{b}\varphi \left( \gamma \right) g\left( \gamma \right)
dx\right) \left( \int_{a}^{b}\psi \left( \gamma \right) h\left( \gamma
\right) d\gamma \right) ,  \notag
\end{eqnarray}%
where $\varphi $ and $\psi $ are two integrable functions on $\left[ a,b%
\right] .$ Many researchers have given considerable attention to the both
functionals and a number of inequalities and a number of extensions,
generalizations and variants have appeared in the literature, for more
details see (\cite{jj}, \cite{hh}, \cite{ii}, \cite{ll}, \cite{mm}). The
functional (\ref{fun2}) is a Chebyshev extension of the functional (\ref%
{fun1}) If $\varphi $ and $\psi $ satisfies the following condition%
\begin{equation*}
\left( \varphi \left( \tau \right) -\varphi \left( \gamma \right) \right)
\left( \psi \left( \tau \right) -\psi \left( \gamma \right) \right) \geq 0,
\end{equation*}%
for any $\tau ,\gamma \in \left[ a,b\right] ,$ then $\varphi $ and $\psi $
are synchronous on $\left[ a,b\right] $, moreover, $T\left( \varphi ,\psi
,g,h\right) \geq 0$ (see also \cite{oo}). It should be noted that the sign
of this inequality is reversed if $\varphi $ and $\psi $ are asynchronous on 
$\left[ a,b\right] ,$ namely $\left( \varphi \left( \tau \right) -\varphi
\left( \gamma \right) \right) \left( \psi \left( \tau \right) -\psi \left(
\gamma \right) \right) \leq 0,$ for any $\tau ,\gamma \in \left[ a,b\right] .
$

One of the most important things to note in this work is the celebrated
Chebyshev functional \cite{FF}%
\begin{equation}
T\left( \varphi ,\psi ,g\right) :=\int_{a}^{b}g\left( \gamma \right) d\gamma
\int_{a}^{b}\varphi \left( \gamma \right) \psi \left( \gamma \right) g\left(
\gamma \right) d\gamma +\int_{a}^{b}g\left( \gamma \right) \varphi \left(
\gamma \right) d\gamma \int_{a}^{b}\psi \left( \gamma \right) g\left( \gamma
\right) d\gamma  \label{fun3}
\end{equation}%
where $\varphi $ and $\psi $ are two differentiable functions on $\left[ a,b%
\right] ,$ and $p$ is positive and integrable functions on $\left[ a,b\right]
,$ this functional has many applications in probability and statistics and
other fields.

In \cite{pp}, (see also \cite{qq}) Dragomir proved that, if $\varphi $ and $%
\psi $ are two differentiable functions such that $\varphi ^{\prime }\in
L_{s}\left( a,b\right) ,\psi ^{\prime }\in L_{v}\left( a,b\right) ,r>1,\frac{%
1}{s}+\frac{1}{v}=1,$ then%
\begin{equation}
2\left\vert T\left( \varphi ,\psi ,g\right) \right\vert \leq \left\Vert
\varphi ^{\prime }\right\Vert _{s}\left\Vert \psi ^{\prime }\right\Vert _{v} 
\left[ \int_{a}^{b}\left\vert \tau -\gamma \right\vert g\left( \tau \right)
g\left( \gamma \right) d\tau d\gamma \right] .  \label{fun4}
\end{equation}
\ For inequality (\ref{fun4}), Dahmani in \cite{rr}, (see also \cite{ss}, 
\cite{tt}) proved fractional version of the inequality%
\begin{eqnarray}
&&\emph{2}\left\vert I^{\alpha }g\left( x\right) I^{\alpha }\left( g\varphi
\psi \right) \left( x\right) -I^{\alpha }\left( g\varphi \right) \left(
x\right) I^{\alpha }\left( g\psi \right) \left( x\right) \right\vert  \notag
\\
&\leq &\frac{\left\Vert \varphi ^{\prime }\right\Vert _{s}\left\Vert \psi
^{\prime }\right\Vert _{v}}{\Gamma ^{2}\left( \alpha \right) }%
\int_{a}^{b}\int_{a}^{b}\left( x-\tau \right) ^{\alpha -1}\left( x-\gamma
\right) ^{\alpha -1}\left\vert \tau -\gamma \right\vert g\left( \tau \right)
g\left( \gamma \right) d\tau d\gamma  \notag \\
&\leq &\left\Vert \varphi ^{\prime }\right\Vert _{s}\left\Vert \psi ^{\prime
}\right\Vert _{v}x\left( I^{\alpha }g\left( x\right) \right) ^{2},
\label{fun5}
\end{eqnarray}%
for all $\alpha >0,x>0.$

Motivated by (\cite{rr}, \cite{ss}, \cite{tt}), in this paper we establish
some new fractional inequalities for Chebyshev functional involving
generalized Katugampola fractional integral. Our results in this paper are
organized in three sections, first and second sections related to Chebyshev
functional in case of synchronous functions and the third section related to
Chebyshev functional in case of differentiable functions whose derivatives
belong to $L_{p}\left( \left[ 0,\infty \right] \right) $.

\bigskip 

\bigskip 

\section{\protect\small \protect\bigskip PRELIMINARIES:}

\bigskip Now, in this section we give the necessary notation and basic
definitions used in our subsequent discussion. For more details see ( \cite%
{aa}, \cite{BB}, \cite{CC} ).

\begin{definition}
Consider the space $X_{c}^{p}\left( a,b\right) \left( c\in 
\mathbb{R}
,1\leq p\leq \infty \right) $, of those complex valued Lebesgue measurable
functions $\varphi $ on $\left( a,b\right) $ for which the norm $\left\Vert
\varphi \right\Vert _{X_{c}^{p}}<\infty $, such that%
\begin{equation*}
\left\Vert \varphi \right\Vert _{X_{c}^{p}}=\left(
\dint\limits_{x}^{b}\left\vert x^{c}\varphi \right\vert ^{p}\frac{dx}{x}%
\right) ^{\frac{1}{p}}\text{ \ \ \ }\left( 1\leq p<\infty \right)
\end{equation*}%
and \ 
\begin{equation*}
\left\Vert \varphi \right\Vert _{X_{c}^{\infty }}=\sup ess_{x\in \left(
a,b\right) }\left[ x^{c}\left\vert \varphi \right\vert \right] .
\end{equation*}%
In particular, when $c=1/p,$ the space $X_{c}^{p}\left( a,b\right) $
coincides with the space $L^{p}\left( a,b\right) .$
\end{definition}

\begin{definition}
\label{def1} The left and right-sided fractional integrals of a function $%
\varphi $ where $\varphi \in $ $X_{c}^{p}\left( a,b\right) ,$ $\alpha >0,$
and $\beta ,\rho ,\eta ,k\in 
\mathbb{R}
,$ are defined respectively by%
\begin{equation}
^{\rho }\mathcal{I}_{a^{+},\eta ,k}^{\alpha ,\beta }\varphi \left( x\right) =%
\frac{\rho ^{1-\beta }x^{k}}{\Gamma \left( \alpha \right) }%
\dint\limits_{a}^{x}\frac{\tau ^{\rho \left( \eta +1\right) -1}}{\left(
x^{\rho }-\tau ^{\rho }\right) ^{1-\alpha }}\varphi \left( \tau \right)
d\tau ,\text{ \ \ \ \ \ \ \ \ }0\leq a<x<b\leq \infty  \label{oper2}
\end{equation}%
and%
\begin{equation}
^{\rho }\mathcal{I}_{b^{-},\eta ,k}^{\alpha ,\beta }\varphi \left( x\right) =%
\frac{\rho ^{1-\beta }x^{\rho \eta }}{\Gamma \left( \alpha \right) }%
\dint\limits_{x}^{b}\frac{\tau ^{k+\rho -1}}{\left( \tau ^{\rho }-x^{\rho
}\right) ^{1-\alpha }}\varphi \left( \tau \right) d\tau ,\text{ \ \ \ \ \ \
\ \ }0\leq a<x<b\leq \infty ,  \label{eq1}
\end{equation}%
if the integral exist.
\end{definition}

In this paper we will use the left-sided fractional integrals (\ref{oper2}),
to present and discuss our new results, also we consider $a=0,$ in (\ref%
{oper2}), to obtain \ \ \ 
\begin{equation*}
^{\rho }\mathcal{I}_{\eta ,k}^{\alpha ,\beta }\varphi \left( x\right) =\frac{%
\rho ^{1-\beta }x^{k}}{\Gamma \left( \alpha \right) }\dint\limits_{0}^{x}%
\frac{\tau ^{\rho \left( \eta +1\right) -1}}{\left( x^{\rho }-\tau ^{\rho
}\right) ^{1-\alpha }}\varphi \left( \tau \right) d\tau .
\end{equation*}
\ \ \ Now we define the following function as in \cite{CC}: let $x>0,\alpha
>0,\rho ,k,\beta ,\eta \in 
\mathbb{R}
$, then 
\begin{equation*}
\Lambda _{x,k}^{\rho ,\beta }\left( \alpha ,\eta \right) =\frac{\Gamma
\left( \eta +1\right) }{\Gamma \left( \eta +\alpha +1\right) }\rho ^{-\beta
}x^{k+\rho \left( \eta +\alpha \right) }.
\end{equation*}%
Note that the Definition (\ref{def1}) is more generalized and can be reduce
to six cases by change its parameters with convenient choice as follows:

- Liouville fractional integral, if $\eta =0,a=0,k=0,$ and taking the limit $%
\rho \rightarrow 1,$ [\cite{dd}, p. 79].

- Weyl fractional integral, if $\eta =0,a=-\infty ,k=0,$ and taking the
limit $\rho \rightarrow 1,$ [\cite{ee}, p. 50].

-Riemann-Liouville fractional integral, if $\eta =0,k=0,$ and taking the
limit $\rho \rightarrow 1,$[\cite{dd}, p. 69].

-Katugampola fractional integral, if $\beta =\alpha ,k=0,\eta =0,$ [\cite{aa}%
].

-Erd\'{e}lyi-Kober fractional integral, if $\beta =0,k=-\rho \left( \alpha
+\eta \right) ,$ [ \cite{dd}, p.105].

-Hadamard fractional integral, if $\beta =\alpha ,k=0,\eta =0^{+},$ and
taking the limit $\rho \rightarrow 1,$ [\cite{dd}, p. 110].

\section{\protect\small Generalized fractional inequality for Chebyshev's
functional:}

\ \ \ In this section we establish inequality for Chebyshev functional \cite%
{FF}, deals with same parameters

\begin{theorem}
\label{thm1} Let $\varphi $ and $\psi $ be two integrable and synchronous
functions on $\left[ 0,\infty \right) .$ Then for all $x>0,\alpha >0,\rho
>0,k,\beta ,\eta \in 
\mathbb{R}
$ we have:%
\begin{equation}
^{\rho }\mathcal{I}_{\eta ,k}^{\alpha ,\beta }\left( \varphi \psi \right)
\left( x\right) \geq \frac{1}{\Lambda _{x,k}^{\rho ,\beta }\left( \alpha
,\eta \right) }\text{ }^{\rho }\mathcal{I}_{\eta ,k}^{\alpha ,\beta }\varphi
\left( x\right) \text{ }^{\rho }\mathcal{I}_{\eta ,k}^{\alpha ,\beta }\psi
\left( x\right) .  \label{INQ1}
\end{equation}
\end{theorem}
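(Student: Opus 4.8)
The plan is to reduce the statement to the classical Chebyshev argument by exploiting positivity of the integral kernel. Write the operator in kernel form: for $\tau\in(0,x)$ set
\begin{equation*}
G\left( x,\tau \right) =\frac{\rho ^{1-\beta }x^{k}}{\Gamma \left( \alpha \right) }\cdot \frac{\tau ^{\rho \left( \eta +1\right) -1}}{\left( x^{\rho }-\tau ^{\rho }\right) ^{1-\alpha }},
\end{equation*}
so that $^{\rho }\mathcal{I}_{\eta ,k}^{\alpha ,\beta }\varphi \left( x\right) =\int_{0}^{x}G\left( x,\tau \right) \varphi \left( \tau \right) d\tau $. Since $\rho >0$, $\alpha >0$, $x>0$, and $\tau ,x^{\rho }-\tau ^{\rho }>0$ on $(0,x)$, every factor of $G$ is nonnegative, hence $G\left( x,\tau \right) \geq 0$. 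I would also record the normalization identity $^{\rho }\mathcal{I}_{\eta ,k}^{\alpha ,\beta }\left( 1\right) \left( x\right) =\Lambda _{x,k}^{\rho ,\beta }\left( \alpha ,\eta \right) $, obtained from $\int_{0}^{x}G\left( x,\tau \right) d\tau $ by the substitution $u=\tau ^{\rho }$, which turns the integral into the Beta integral $\int_{0}^{x^{\rho }}u^{\eta }\left( x^{\rho }-u\right) ^{\alpha -1}du=\left( x^{\rho }\right) ^{\eta +\alpha }\Gamma \left( \eta +1\right) \Gamma \left( \alpha \right) /\Gamma \left( \eta +\alpha +1\right) $.

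Next I would invoke synchronicity: for all $\tau ,\gamma \geq 0$,
\begin{equation*}
\left( \varphi \left( \tau \right) -\varphi \left( \gamma \right) \right) \left( \psi \left( \tau \right) -\psi \left( \gamma \right) \right) \geq 0,
\end{equation*}
equivalently $\varphi \left( \tau \right) \psi \left( \tau \right) +\varphi \left( \gamma \right) \psi \left( \gamma \right) \geq \varphi \left( \tau \right) \psi \left( \gamma \right) +\varphi \left( \gamma \right) \psi \left( \tau \right) $. Multiply this inequality by $G\left( x,\tau \right) \geq 0$ and integrate in $\tau $ over $(0,x)$; using the kernel form and the normalization identity this yields, for each fixed $\gamma \in \left( 0,x\right) $,
\begin{equation*}
^{\rho }\mathcal{I}_{\eta ,k}^{\alpha ,\beta }\left( \varphi \psi \right) \left( x\right) +\varphi \left( \gamma \right) \psi \left( \gamma \right) \Lambda _{x,k}^{\rho ,\beta }\left( \alpha ,\eta \right) \geq \psi \left( \gamma \right) \text{ }^{\rho }\mathcal{I}_{\eta ,k}^{\alpha ,\beta }\varphi \left( x\right) +\varphi \left( \gamma \right) \text{ }^{\rho }\mathcal{I}_{\eta ,k}^{\alpha ,\beta }\psi \left( x\right) .
\end{equation*}
Now multiply this by $G\left( x,\gamma \right) \geq 0$ and integrate in $\gamma $ over $(0,x)$. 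The two left-hand terms each contribute $\Lambda _{x,k}^{\rho ,\beta }\left( \alpha ,\eta \right) \,{}^{\rho }\mathcal{I}_{\eta ,k}^{\alpha ,\beta }\left( \varphi \psi \right) \left( x\right) $, while the two right-hand terms each contribute $^{\rho }\mathcal{I}_{\eta ,k}^{\alpha ,\beta }\varphi \left( x\right) \,{}^{\rho }\mathcal{I}_{\eta ,k}^{\alpha ,\beta }\psi \left( x\right) $. Dividing by $2\Lambda _{x,k}^{\rho ,\beta }\left( \alpha ,\eta \right) $ gives exactly (\ref{INQ1}).

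The only genuinely delicate points are bookkeeping and positivity of $\Lambda _{x,k}^{\rho ,\beta }\left( \alpha ,\eta \right) $, which is needed both to divide at the end and to keep the inequality direction. Convergence of the defining integral already forces $\rho \left( \eta +1\right) -1>-1$, i.e. $\eta >-1$, so that $\Gamma \left( \eta +1\right) >0$; together with $\alpha >0$ this makes $\Gamma \left( \eta +\alpha +1\right) >0$, and since $\rho ^{-\beta }>0$ and $x^{k+\rho \left( \eta +\alpha \right) }>0$ we conclude $\Lambda _{x,k}^{\rho ,\beta }\left( \alpha ,\eta \right) >0$. I would state the normalization identity $^{\rho }\mathcal{I}_{\eta ,k}^{\alpha ,\beta }\left( 1\right) \left( x\right) =\Lambda _{x,k}^{\rho ,\beta }\left( \alpha ,\eta \right) $ as a preliminary computation (or lemma) so the double-integration step is clean; everything else is the standard two-fold application of the synchronous hypothesis against a positive kernel, with the asynchronous case following by reversing all inequalities.
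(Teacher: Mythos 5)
Your proposal is correct and follows essentially the same route as the paper: the two-fold application of the synchronous inequality against the nonnegative kernel, integrated first in $\tau$ and then in $\gamma$, followed by division by $2\Lambda _{x,k}^{\rho ,\beta }\left( \alpha ,\eta \right) $. Your explicit verification of the normalization identity $^{\rho }\mathcal{I}_{\eta ,k}^{\alpha ,\beta }\left( 1\right) \left( x\right) =\Lambda _{x,k}^{\rho ,\beta }\left( \alpha ,\eta \right) $ and of the positivity of $\Lambda $ (via $\eta >-1$ forced by convergence) are welcome details that the paper leaves implicit, but they do not change the argument.
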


\begin{proof}
For a synchronous functions $\varphi ,\psi $ on $\left[ 0,\infty \right) ,$
we have for all $\tau \geq 0,\gamma \geq 0:$%
\begin{equation*}
\left( \varphi \left( \tau \right) -\varphi \left( \gamma \right) \right)
\left( \psi \left( \tau \right) -\psi \left( \gamma \right) \right) \geq 0.
\end{equation*}%
Therefore%
\begin{equation}
\varphi \left( \tau \right) \psi \left( \tau \right) +\varphi \left( \gamma
\right) \psi \left( \gamma \right) \geq \varphi \left( \tau \right) \psi
\left( \gamma \right) +\varphi \left( \gamma \right) \psi \left( \tau
\right) .  \label{inq1}
\end{equation}
Multiplying both sides of (\ref{inq1}) by $\frac{\rho ^{1-\beta }x^{k}}{%
\Gamma \left( \alpha \right) }\frac{\tau ^{\rho \left( \eta +1\right) -1}}{%
\left( x^{\rho }-\tau ^{\rho }\right) ^{1-\alpha }},$ where $\tau \in \left(
0,x\right) ,$ and integrating the resulting inequality over $\left(
0,x\right) ,$ with respect to the variable $\tau ,$ we obtain:%
\begin{eqnarray}
&&\frac{\rho ^{1-\beta }x^{k}}{\Gamma \left( \alpha \right) }\int_{0}^{x}%
\frac{\tau ^{\rho \left( \eta +1\right) -1}}{\left( x^{\rho }-\tau ^{\rho
}\right) ^{1-\alpha }}\varphi \left( \tau \right) \psi \left( \tau \right)
d\tau  \notag \\
&&+\varphi \left( \gamma \right) \psi \left( \gamma \right) \frac{\rho
^{1-\beta }x^{k}}{\Gamma \left( \alpha \right) }\int_{0}^{x}\frac{\tau
^{\rho \left( \eta +1\right) -1}}{\left( x^{\rho }-\tau ^{\rho }\right)
^{1-\alpha }}d\tau  \notag \\
&\geq &\psi \left( \gamma \right) \frac{\rho ^{1-\beta }x^{k}}{\Gamma \left(
\alpha \right) }\int_{0}^{x}\frac{\tau ^{\rho \left( \eta +1\right) -1}}{%
\left( x^{\rho }-\tau ^{\rho }\right) ^{1-\alpha }}\varphi \left( \tau
\right) d\tau  \label{inq3} \\
&&+\varphi \left( \gamma \right) \frac{\rho ^{1-\beta }x^{k}}{\Gamma \left(
\alpha \right) }\int_{0}^{x}\frac{\tau ^{\rho \left( \eta +1\right) -1}}{%
\left( x^{\rho }-\tau ^{\rho }\right) ^{1-\alpha }}\psi \left( \tau \right)
d\tau .  \notag
\end{eqnarray}%
So we have%
\begin{eqnarray}
&&^{\rho }\mathcal{I}_{\eta ,k}^{\alpha ,\beta }\left( \varphi \psi \right)
\left( x\right) +\Lambda _{x,k}^{\rho ,\beta }\left( \alpha ,\eta \right)
\varphi \left( \gamma \right) \psi \left( \gamma \right)  \notag \\
&\geq &\psi \left( \gamma \right) \text{ }^{\rho }\mathcal{I}_{\eta
,k}^{\alpha ,\beta }\varphi \left( x\right) +\varphi \left( \gamma \right) 
\text{ }^{\rho }\mathcal{I}_{\eta ,k}^{\alpha ,\beta }\psi \left( x\right) .
\label{inq4}
\end{eqnarray}%
Now multiplying both sides of (\ref{inq4}) by $\frac{\rho ^{1-\beta }x^{k}}{%
\Gamma \left( \alpha \right) }\frac{\gamma ^{\rho \left( \eta +1\right) -1}}{%
\left( x^{\rho }-\gamma ^{\rho }\right) ^{1-\alpha }},$ where $\gamma \in
\left( 0,x\right) ,$over $\left( 0,x\right) ,$ Then integrating the
resulting inequality over $\left( 0,x\right) ,$ with respect to the variable 
$\gamma ,$ we get: 
\begin{eqnarray*}
&&^{\rho }\mathcal{I}_{\eta ,k}^{\alpha ,\beta }\left( \varphi \psi \right)
\left( x\right) \frac{\rho ^{1-\beta }x^{k}}{\Gamma \left( \alpha \right) }%
\int_{0}^{x}\frac{\gamma ^{\rho \left( \eta +1\right) -1}}{\left( x^{\rho
}-\gamma ^{\rho }\right) ^{1-\alpha }}d\gamma \\
&&+\Lambda _{x,k}^{\rho ,\beta }\left( \alpha ,\eta \right) \frac{\rho
^{1-\beta }x^{k}}{\Gamma \left( \alpha \right) }\int_{0}^{x}\frac{\gamma
^{\rho \left( \eta +1\right) -1}}{\left( x^{\rho }-\gamma ^{\rho }\right)
^{1-\alpha }}\varphi \left( \gamma \right) \psi \left( \gamma \right) d\gamma
\\
&\geq &\text{ }^{\rho }\mathcal{I}_{\eta ,k}^{\alpha ,\beta }\varphi \left(
x\right) \frac{\rho ^{1-\beta }x^{k}}{\Gamma \left( \alpha \right) }%
\int_{0}^{x}\frac{\gamma ^{\rho \left( \eta +1\right) -1}}{\left( x^{\rho
}-\gamma ^{\rho }\right) ^{1-\alpha }}\psi \left( \gamma \right) d\gamma \\
&&+\text{ }^{\rho }\mathcal{I}_{\eta ,k}^{\alpha ,\beta }\psi \left(
x\right) \frac{\rho ^{1-\beta }x^{k}}{\Gamma \left( \alpha \right) }%
\int_{0}^{x}\frac{\gamma ^{\rho \left( \eta +1\right) -1}}{\left( x^{\rho
}-\gamma ^{\rho }\right) ^{1-\alpha }}\varphi \left( \gamma \right) d\gamma .
\end{eqnarray*}%
So we have%
\begin{eqnarray*}
&&^{\rho }\mathcal{I}_{\eta ,k}^{\alpha ,\beta }\left( \varphi \psi \right)
\left( x\right) ^{\rho }\mathcal{I}_{\eta ,k}^{\alpha ,\beta }\left(
1\right) +\Lambda _{x,k}^{\rho ,\beta }\left( \alpha ,\eta \right) ^{\rho }%
\mathcal{I}_{\eta ,k}^{\alpha ,\beta }\left( \varphi \psi \right) \left(
x\right) \\
&\geq &\text{ }^{\rho }\mathcal{I}_{\eta ,k}^{\alpha ,\beta }\varphi \left(
x\right) \text{ }^{\rho }\mathcal{I}_{\eta ,k}^{\alpha ,\beta }\psi \left(
x\right) +\text{ }^{\rho }\mathcal{I}_{\eta ,k}^{\alpha ,\beta }\psi \left(
x\right) \text{ }^{\rho }\mathcal{I}_{\eta ,k}^{\alpha ,\beta }\varphi
\left( x\right) .
\end{eqnarray*}%
Hence%
\begin{equation*}
^{\rho }\mathcal{I}_{\eta ,k}^{\alpha ,\beta }\left( \varphi \psi \right)
\left( x\right) \geq \frac{1}{\Lambda _{x,k}^{\rho ,\beta }\left( \alpha
,\eta \right) }\text{ }^{\rho }\mathcal{I}_{\eta ,k}^{\alpha ,\beta }\varphi
\left( x\right) \text{ }^{\rho }\mathcal{I}_{\eta ,k}^{\alpha ,\beta }\psi
\left( x\right) .
\end{equation*}%
The result has proved.
\end{proof}

Our next theorem on Chebyshev functional deals with different fractional
parameters:

\begin{theorem}
Let $\varphi $ and $\psi $ be two integrable and synchronous functions on $%
\left[ 0,\infty \right) .$Then for all $x>0,\alpha >0,\delta >0,\rho
>0,k,\beta ,\lambda ,\eta \in 
\mathbb{R}
,$ we have:%
\begin{eqnarray*}
&&\Lambda _{x,k}^{\rho ,\lambda }\left( \delta ,\eta \right) \text{ }^{\rho }%
\mathcal{I}_{\eta ,k}^{\alpha ,\beta }\left( \varphi \psi \right) \left(
x\right) +\Lambda _{x,k}^{\rho ,\beta }\left( \alpha ,\eta \right) ^{\rho }%
\mathcal{I}_{\eta ,k}^{\delta ,\lambda }\left( \varphi \psi \right) \left(
x\right) \\
&\geq &\text{ }^{\rho }\mathcal{I}_{\eta ,k}^{\alpha ,\beta }\varphi \left(
x\right) \text{ }^{\rho }\mathcal{I}_{\eta ,k}^{\delta ,\lambda }\psi \left(
x\right) +\text{ }^{\rho }\mathcal{I}_{\eta ,k}^{\alpha ,\beta }\psi \left(
x\right) \text{ }^{\rho }\mathcal{I}_{\eta ,k}^{\delta ,\lambda }\varphi
\left( x\right) .
\end{eqnarray*}
\end{theorem}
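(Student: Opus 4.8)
The plan is to mimic the two-stage ``multiply by a kernel and integrate'' argument of Theorem \ref{thm1}, but now performing the two integrations against the kernels of two \emph{different} operators, $^{\rho}\mathcal{I}_{\eta,k}^{\alpha,\beta}$ and $^{\rho}\mathcal{I}_{\eta,k}^{\delta,\lambda}$. The starting point is again the synchronicity inequality: for all $\tau,\gamma\ge 0$,
\[
\varphi(\tau)\psi(\tau)+\varphi(\gamma)\psi(\gamma)\ge \varphi(\tau)\psi(\gamma)+\varphi(\gamma)\psi(\tau).
\]

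In the first stage I would fix $\gamma\in(0,x)$, multiply both sides by the kernel $\dfrac{\rho^{1-\beta}x^{k}}{\Gamma(\alpha)}\dfrac{\tau^{\rho(\eta+1)-1}}{(x^{\rho}-\tau^{\rho})^{1-\alpha}}$, and integrate in $\tau$ over $(0,x)$. The kernel is nonnegative there because $x>0$ and $\rho>0$, so the inequality direction is preserved; recognizing the resulting integrals, and using $^{\rho}\mathcal{I}_{\eta,k}^{\alpha,\beta}(1)(x)=\Lambda_{x,k}^{\rho,\beta}(\alpha,\eta)$, yields for every $\gamma\in(0,x)$
\[
{}^{\rho}\mathcal{I}_{\eta,k}^{\alpha,\beta}(\varphi\psi)(x)+\Lambda_{x,k}^{\rho,\beta}(\alpha,\eta)\,\varphi(\gamma)\psi(\gamma)\ge \psi(\gamma)\,{}^{\rho}\mathcal{I}_{\eta,k}^{\alpha,\beta}\varphi(x)+\varphi(\gamma)\,{}^{\rho}\mathcal{I}_{\eta,k}^{\alpha,\beta}\psi(x).
\]
In the second stage I would multiply this inequality by the kernel of the \emph{second} operator, $\dfrac{\rho^{1-\lambda}x^{k}}{\Gamma(\delta)}\dfrac{\gamma^{\rho(\eta+1)-1}}{(x^{\rho}-\gamma^{\rho})^{1-\delta}}$, which is again nonnegative on $(0,x)$, and integrate in $\gamma$ over $(0,x)$. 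Using $^{\rho}\mathcal{I}_{\eta,k}^{\delta,\lambda}(1)(x)=\Lambda_{x,k}^{\rho,\lambda}(\delta,\eta)$, the four terms become $\Lambda_{x,k}^{\rho,\lambda}(\delta,\eta)\,{}^{\rho}\mathcal{I}_{\eta,k}^{\alpha,\beta}(\varphi\psi)(x)$, $\Lambda_{x,k}^{\rho,\beta}(\alpha,\eta)\,{}^{\rho}\mathcal{I}_{\eta,k}^{\delta,\lambda}(\varphi\psi)(x)$, $^{\rho}\mathcal{I}_{\eta,k}^{\alpha,\beta}\varphi(x)\,{}^{\rho}\mathcal{I}_{\eta,k}^{\delta,\lambda}\psi(x)$ and $^{\rho}\mathcal{I}_{\eta,k}^{\alpha,\beta}\psi(x)\,{}^{\rho}\mathcal{I}_{\eta,k}^{\delta,\lambda}\varphi(x)$, which is exactly the asserted inequality.

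The one computational ingredient beyond bookkeeping is the identity $^{\rho}\mathcal{I}_{\eta,k}^{\alpha,\beta}(1)(x)=\Lambda_{x,k}^{\rho,\beta}(\alpha,\eta)$ together with its $(\delta,\lambda)$ analogue: the substitution $u=(\tau/x)^{\rho}$ reduces $\int_{0}^{x}\tau^{\rho(\eta+1)-1}(x^{\rho}-\tau^{\rho})^{\alpha-1}\,d\tau$ to $\tfrac{1}{\rho}x^{\rho(\eta+\alpha)}B(\eta+1,\alpha)$, and writing $B$ in Gamma-function form and multiplying by $\rho^{1-\beta}x^{k}/\Gamma(\alpha)$ produces $\Lambda_{x,k}^{\rho,\beta}(\alpha,\eta)$. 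Interchanging the finite sums with the integrals is legitimate since $\varphi$ and $\psi$ are integrable; note that synchronicity is invoked only once, at the first line, and not needed thereafter. I do not anticipate a genuine obstacle: the argument is a direct double pass of the mechanism already validated in Theorem \ref{thm1}, the only care being to track the two parameter triples $(\alpha,\beta)$ and $(\delta,\lambda)$ correctly — indeed setting $\delta=\alpha$, $\lambda=\beta$ recovers (a doubled form of) Theorem \ref{thm1}.
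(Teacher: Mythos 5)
Your proposal is correct and follows essentially the same route as the paper: the paper likewise reuses the pointwise inequality obtained after the first $(\alpha,\beta)$-kernel integration (its inequality (\ref{inq6})) and then integrates against the $(\delta,\lambda)$-kernel, identifying $^{\rho}\mathcal{I}_{\eta ,k}^{\delta ,\lambda }(1)(x)=\Lambda _{x,k}^{\rho ,\lambda }(\delta ,\eta )$. Your explicit Beta-function verification of that identity is a welcome detail the paper leaves implicit.
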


\begin{proof}
Since $\varphi $ and $\psi $ are synchronous functions on $\left[ 0,\infty
\right) ,$ by similar arguments as in the proof of Theorem (\ref{thm1}) we
can write 
\begin{eqnarray}
&&^{\rho }\mathcal{I}_{\eta ,k}^{\alpha ,\beta }\left( \varphi \psi \right)
\left( x\right) +\Lambda _{x,k}^{\rho ,\beta }\left( \alpha ,\eta \right)
\varphi \left( \gamma \right) \psi \left( \gamma \right)  \notag \\
&\geq &\psi \left( \gamma \right) \text{ }^{\rho }\mathcal{I}_{\eta
,k}^{\alpha ,\beta }\varphi \left( x\right) +\varphi \left( \gamma \right) 
\text{ }^{\rho }\mathcal{I}_{\eta ,k}^{\alpha ,\beta }\psi \left( x\right) .
\label{inq6}
\end{eqnarray}%
Now multiplying both sides of (\ref{inq6}) by $\frac{\rho ^{1-\lambda }x^{k}%
}{\Gamma \left( \delta \right) }\frac{\gamma ^{\rho \left( \eta +1\right) -1}%
}{\left( x^{\rho }-\gamma ^{\rho }\right) ^{1-\delta }},$ where $\gamma \in
\left( 0,x\right) ,$ then integrating the resulting inequality over $\left(
0,x\right) ,$ with respect to the variable $\gamma ,$ we obtain:%
\begin{eqnarray}
&&^{\rho }\mathcal{I}_{\eta ,k}^{\alpha ,\beta }\left( \varphi \psi \right)
\left( x\right) \frac{\rho ^{1-\lambda }x^{k}}{\Gamma \left( \delta \right) }%
\int_{0}^{x}\frac{\gamma ^{\rho \left( \eta +1\right) -1}}{\left( x^{\rho
}-\gamma ^{\rho }\right) ^{1-\delta }}d\gamma  \notag \\
&&+\Lambda _{x,k}^{\rho ,\beta }\left( \alpha ,\eta \right) \frac{\rho
^{1-\lambda }x^{k}}{\Gamma \left( \delta \right) }\int_{0}^{x}\frac{\gamma
^{\rho \left( \eta +1\right) -1}}{\left( x^{\rho }-\gamma ^{\rho }\right)
^{1-\delta }}\varphi \left( \gamma \right) \psi \left( \gamma \right) d\gamma
\notag \\
&\geq &\text{ }^{\rho }\mathcal{I}_{\eta ,k}^{\alpha ,\beta }\varphi \left(
x\right) \frac{\rho ^{1-\lambda }x^{k}}{\Gamma \left( \delta \right) }%
\int_{0}^{x}\frac{\gamma ^{\rho \left( \eta +1\right) -1}}{\left( x^{\rho
}-\gamma ^{\rho }\right) ^{1-\delta }}\psi \left( \gamma \right) d\gamma 
\notag \\
&&+\text{ }^{\rho }\mathcal{I}_{\eta ,k}^{\alpha ,\beta }\psi \left(
x\right) \frac{\rho ^{1-\lambda }x^{k}}{\Gamma \left( \delta \right) }%
\int_{0}^{x}\frac{\gamma ^{\rho \left( \eta +1\right) -1}}{\left( x^{\rho
}-\gamma ^{\rho }\right) ^{1-\delta }}\varphi \left( \gamma \right) d\gamma .
\label{inq8}
\end{eqnarray}%
So we have%
\begin{eqnarray*}
&&\Lambda _{x,k}^{\rho ,\lambda }\left( \delta ,\eta \right) \text{ }^{\rho }%
\mathcal{I}_{\eta ,k}^{\alpha ,\beta }\left( \varphi \psi \right) \left(
x\right) +\Lambda _{x,k}^{\rho ,\beta }\left( \alpha ,\eta \right) ^{\rho }%
\mathcal{I}_{\eta ,k}^{\delta ,\lambda }\left( \varphi \psi \right) \left(
x\right) \\
&\geq &\text{ }^{\rho }\mathcal{I}_{\eta ,k}^{\alpha ,\beta }\varphi \left(
x\right) \text{ }^{\rho }\mathcal{I}_{\eta ,k}^{\delta ,\lambda }\psi \left(
x\right) +\text{ }^{\rho }\mathcal{I}_{\eta ,k}^{\alpha ,\beta }\psi \left(
x\right) \text{ }^{\rho }\mathcal{I}_{\eta ,k}^{\delta ,\lambda }\varphi
\left( x\right) .
\end{eqnarray*}%
Hence the proof.
\end{proof}

\section{\protect\small Generalized fractional inequality for extended
Chebyshev's functional:}

\ \ \ In this section, we consider the extended Chebyshev functional in case
of synchronous functions (\ref{fun2}). To prove our theorem in this section,
we prove the following lemma:

\begin{lemma}
\label{lem1}Let $\varphi $ and $\psi $ be two integrable and synchronous
functions on $\left[ 0,\infty \right) ,$ suppose $s,v:\left[ 0,\infty
\right) \rightarrow \left[ 0,\infty \right) ,$ Then for all $x>0,\alpha
>0,\rho >0,k,\beta ,\eta \in 
\mathbb{R}
$ we have:%
\begin{eqnarray*}
&&^{\rho }\mathcal{I}_{\eta ,k}^{\alpha ,\beta }\left( s\varphi \psi \right)
\left( x\right) ^{\rho }\mathcal{I}_{\eta ,k}^{\alpha ,\beta }v\left(
x\right) +\text{ }^{\rho }\mathcal{I}_{\eta ,k}^{\alpha ,\beta }s\left(
x\right) ^{\rho }\mathcal{I}_{\eta ,k}^{\alpha ,\beta }\left( v\varphi \psi
\right) \left( x\right) \\
&\geq &\text{ }^{\rho }\mathcal{I}_{\eta ,k}^{\alpha ,\beta }\left( s\varphi
\right) \left( x\right) \text{ }^{\rho }\mathcal{I}_{\eta ,k}^{\alpha ,\beta
}\left( v\psi \right) \left( x\right) +\text{ }^{\rho }\mathcal{I}_{\eta
,k}^{\alpha ,\beta }\left( s\psi \right) \left( x\right) \text{ }^{\rho }%
\mathcal{I}_{\eta ,k}^{\alpha ,\beta }\left( v\varphi \right) \left(
x\right) .
\end{eqnarray*}
\end{lemma}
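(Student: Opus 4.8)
The plan is to follow exactly the double-integration scheme of Theorem~\ref{thm1}, but to bring in the two weight functions $s,v$ at the very first step and to use their nonnegativity to keep every inequality pointing the same way. Since $\varphi$ and $\psi$ are synchronous on $[0,\infty)$, for all $\tau,\gamma\ge 0$ we have $\left(\varphi(\tau)-\varphi(\gamma)\right)\left(\psi(\tau)-\psi(\gamma)\right)\ge 0$, that is,
\begin{equation*}
\varphi(\tau)\psi(\tau)+\varphi(\gamma)\psi(\gamma)\ge \varphi(\tau)\psi(\gamma)+\varphi(\gamma)\psi(\tau);
\end{equation*}
multiplying this through by $s(\tau)v(\gamma)\ge 0$ preserves it.

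Next I would multiply both sides of the weighted inequality by the kernel $\frac{\rho^{1-\beta}x^{k}}{\Gamma(\alpha)}\frac{\tau^{\rho(\eta+1)-1}}{\left(x^{\rho}-\tau^{\rho}\right)^{1-\alpha}}$ and integrate over $\tau\in(0,x)$. For $x>0$, $\alpha>0$, $\rho>0$ this kernel is strictly positive on $(0,x)$, so the inequality survives; pulling the $\gamma$-dependent quantities out of the $\tau$-integral and reading off the operator ${}^{\rho}\mathcal{I}_{\eta,k}^{\alpha,\beta}$ applied to $s\varphi\psi$, $s$, $s\varphi$ and $s\psi$ gives
\begin{equation*}
v(\gamma)\,{}^{\rho}\mathcal{I}_{\eta,k}^{\alpha,\beta}(s\varphi\psi)(x)+v(\gamma)\varphi(\gamma)\psi(\gamma)\,{}^{\rho}\mathcal{I}_{\eta,k}^{\alpha,\beta}s(x)\ge v(\gamma)\psi(\gamma)\,{}^{\rho}\mathcal{I}_{\eta,k}^{\alpha,\beta}(s\varphi)(x)+v(\gamma)\varphi(\gamma)\,{}^{\rho}\mathcal{I}_{\eta,k}^{\alpha,\beta}(s\psi)(x).
\end{equation*}
Now I would repeat the procedure in the variable $\gamma$: multiply through by $\frac{\rho^{1-\beta}x^{k}}{\Gamma(\alpha)}\frac{\gamma^{\rho(\eta+1)-1}}{\left(x^{\rho}-\gamma^{\rho}\right)^{1-\alpha}}\ge 0$ and integrate over $\gamma\in(0,x)$. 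The left-hand side becomes ${}^{\rho}\mathcal{I}_{\eta,k}^{\alpha,\beta}v(x)\,{}^{\rho}\mathcal{I}_{\eta,k}^{\alpha,\beta}(s\varphi\psi)(x)+{}^{\rho}\mathcal{I}_{\eta,k}^{\alpha,\beta}(v\varphi\psi)(x)\,{}^{\rho}\mathcal{I}_{\eta,k}^{\alpha,\beta}s(x)$ and the right-hand side becomes ${}^{\rho}\mathcal{I}_{\eta,k}^{\alpha,\beta}(v\psi)(x)\,{}^{\rho}\mathcal{I}_{\eta,k}^{\alpha,\beta}(s\varphi)(x)+{}^{\rho}\mathcal{I}_{\eta,k}^{\alpha,\beta}(v\varphi)(x)\,{}^{\rho}\mathcal{I}_{\eta,k}^{\alpha,\beta}(s\psi)(x)$, which is precisely the asserted inequality.

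I do not expect a genuine obstacle: the argument is bookkeeping. The only point that has to be checked with care --- and the reason the hypotheses $s,v\ge 0$ and $x>0$, $\alpha>0$, $\rho>0$ are present --- is that each multiplier used (first $s(\tau)v(\gamma)$, then each of the two fractional kernels) is nonnegative on the relevant domain; the exponents $1-\beta$ and $\rho(\eta+1)-1$ are allowed to be negative, but for $\tau\in(0,x)$ this affects only the magnitude, not the sign, of the kernel. One should also assume implicitly that $s,v,\varphi,\psi$ are such that all the fractional integrals appearing are finite. Finally, the choice $s\equiv v\equiv 1$ recovers Theorem~\ref{thm1} (using ${}^{\rho}\mathcal{I}_{\eta,k}^{\alpha,\beta}(1)(x)=\Lambda_{x,k}^{\rho,\beta}(\alpha,\eta)$), so the lemma is genuinely a generalization of it.
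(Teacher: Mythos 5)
Your proposal is correct and follows essentially the same double-weighting-and-integration argument as the paper; the only cosmetic difference is that you multiply by $s(\tau)v(\gamma)$ at the outset, whereas the paper folds $s(\tau)$ into the first kernel and $v(\gamma)$ into the second, which is the same computation. Your remarks on the positivity of the kernels and the recovery of Theorem~\ref{thm1} via $s\equiv v\equiv 1$ are accurate.
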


\begin{proof}
For a synchronous functions $\varphi ,\psi $ on $\left[ 0,\infty \right) ,$
we have for all $\tau \geq 0,\gamma \geq 0:$%
\begin{equation*}
\left( \varphi \left( \tau \right) -\varphi \left( \gamma \right) \right)
\left( \psi \left( \tau \right) -\psi \left( \gamma \right) \right) \geq 0.
\end{equation*}%
Therefore%
\begin{equation}
\varphi \left( \tau \right) \psi \left( \tau \right) +\varphi \left( \gamma
\right) \psi \left( \gamma \right) \geq \varphi \left( \tau \right) \psi
\left( \gamma \right) +\varphi \left( \gamma \right) \psi \left( \tau
\right) .  \label{INQ2}
\end{equation}%
Now multiplying both sides of (\ref{INQ2}) by $\frac{\rho ^{1-\beta }x^{k}}{%
\Gamma \left( \alpha \right) }\frac{\tau ^{\rho \left( \eta +1\right)
-1}s\left( \tau \right) }{\left( x^{\rho }-\tau ^{\rho }\right) ^{1-\alpha }}%
,$ where $\tau \in \left( 0,x\right) ,$then integrating the resulting
inequality over $\left( 0,x\right) ,$ with respect to the variable $\tau ,$
we get:%
\begin{eqnarray*}
&&\frac{\rho ^{1-\beta }x^{k}}{\Gamma \left( \alpha \right) }\int_{0}^{x}%
\frac{\tau ^{\rho \left( \eta +1\right) -1}}{\left( x^{\rho }-\tau ^{\rho
}\right) ^{1-\alpha }}s\left( \tau \right) \varphi \left( \tau \right) \psi
\left( \tau \right) d\tau \\
&&+\varphi \left( \gamma \right) \psi \left( \gamma \right) \frac{\rho
^{1-\beta }x^{k}}{\Gamma \left( \alpha \right) }\int_{0}^{x}\frac{\tau
^{\rho \left( \eta +1\right) -1}}{\left( x^{\rho }-\tau ^{\rho }\right)
^{1-\alpha }}s\left( \tau \right) d\tau \\
&\geq &\psi \left( \gamma \right) \frac{\rho ^{1-\beta }x^{k}}{\Gamma \left(
\alpha \right) }\int_{0}^{x}\frac{\tau ^{\rho \left( \eta +1\right) -1}}{%
\left( x^{\rho }-\tau ^{\rho }\right) ^{1-\alpha }}s\left( \tau \right)
\varphi \left( \tau \right) d\tau \\
&&+\varphi \left( \gamma \right) \frac{\rho ^{1-\beta }x^{k}}{\Gamma \left(
\alpha \right) }\int_{0}^{x}\frac{\tau ^{\rho \left( \eta +1\right) -1}}{%
\left( x^{\rho }-\tau ^{\rho }\right) ^{1-\alpha }}s\left( \tau \right) \psi
\left( \tau \right) d\tau .
\end{eqnarray*}%
So we have%
\begin{eqnarray}
&&^{\rho }\mathcal{I}_{\eta ,k}^{\alpha ,\beta }\left( s\varphi \psi \right)
\left( x\right) +\varphi \left( \gamma \right) \psi \left( \gamma \right) 
\text{ }^{\rho }\mathcal{I}_{\eta ,k}^{\alpha ,\beta }s\left( x\right) 
\notag \\
&\geq &\psi \left( \gamma \right) \text{ }^{\rho }\mathcal{I}_{\eta
,k}^{\alpha ,\beta }\left( s\varphi \right) \left( x\right) +\varphi \left(
\gamma \right) \text{ }^{\rho }\mathcal{I}_{\eta ,k}^{\alpha ,\beta }\left(
s\psi \right) \left( x\right) .  \label{INQ4}
\end{eqnarray}%
Now multiplying both sides of (\ref{INQ4}) by $\frac{\rho ^{1-\beta }x^{k}}{%
\Gamma \left( \alpha \right) }\frac{\gamma ^{\rho \left( \eta +1\right)
-1}v\left( \gamma \right) }{\left( x^{\rho }-\gamma ^{\rho }\right)
^{1-\alpha }},$ where $\gamma \in \left( 0,x\right) ,$ and integrating the
resulting inequality over $\left( 0,x\right) ,$ with respect to the variable 
$\gamma ,$ we obtain:%
\begin{eqnarray*}
&&^{\rho }\mathcal{I}_{\eta ,k}^{\alpha ,\beta }\left( s\varphi \psi \right)
\left( x\right) \frac{\rho ^{1-\beta }x^{k}}{\Gamma \left( \alpha \right) }%
\int_{0}^{x}\frac{\gamma ^{\rho \left( \eta +1\right) -1}}{\left( x^{\rho
}-\gamma ^{\rho }\right) ^{1-\alpha }}v\left( \gamma \right) d\gamma \\
&&+\text{ }^{\rho }\mathcal{I}_{\eta ,k}^{\alpha ,\beta }s\left( x\right) 
\frac{\rho ^{1-\beta }x^{k}}{\Gamma \left( \alpha \right) }\int_{0}^{x}\frac{%
\gamma ^{\rho \left( \eta +1\right) -1}}{\left( x^{\rho }-\gamma ^{\rho
}\right) ^{1-\alpha }}v\left( \gamma \right) \varphi \left( \gamma \right)
\psi \left( \gamma \right) d\gamma \\
&\geq &\text{ }^{\rho }\mathcal{I}_{\eta ,k}^{\alpha ,\beta }\left( s\varphi
\right) \left( x\right) \frac{\rho ^{1-\beta }x^{k}}{\Gamma \left( \alpha
\right) }\int_{0}^{x}\frac{\gamma ^{\rho \left( \eta +1\right) -1}}{\left(
x^{\rho }-\gamma ^{\rho }\right) ^{1-\alpha }}v\left( \gamma \right) \psi
\left( \gamma \right) d\gamma \\
&&+\text{ }^{\rho }\mathcal{I}_{\eta ,k}^{\alpha ,\beta }\left( s\psi
\right) \left( x\right) \frac{\rho ^{1-\beta }x^{k}}{\Gamma \left( \alpha
\right) }\int_{0}^{x}\frac{\gamma ^{\rho \left( \eta +1\right) -1}}{\left(
x^{\rho }-\gamma ^{\rho }\right) ^{1-\alpha }}v\left( \gamma \right) \varphi
\left( \gamma \right) d\gamma .
\end{eqnarray*}%
Therefore%
\begin{eqnarray*}
&&^{\rho }\mathcal{I}_{\eta ,k}^{\alpha ,\beta }\left( s\varphi \psi \right)
\left( x\right) \text{ }^{\rho }\mathcal{I}_{\eta ,k}^{\alpha ,\beta
}v\left( x\right) +\text{ }^{\rho }\mathcal{I}_{\eta ,k}^{\alpha ,\beta
}s\left( x\right) \text{ }^{\rho }\mathcal{I}_{\eta ,k}^{\alpha ,\beta
}\left( v\varphi \psi \right) \left( x\right) \\
&\geq &\text{ }^{\rho }\mathcal{I}_{\eta ,k}^{\alpha ,\beta }\left( s\varphi
\right) \left( x\right) \text{ }^{\rho }\mathcal{I}_{\eta ,k}^{\alpha ,\beta
}\left( v\psi \right) \left( x\right) +\text{ }^{\rho }\mathcal{I}_{\eta
,k}^{\alpha ,\beta }\left( s\psi \right) \left( x\right) \text{ }^{\rho }%
\mathcal{I}_{\eta ,k}^{\alpha ,\beta }\left( v\varphi \right) \left(
x\right) .
\end{eqnarray*}%
Hence the proof.
\end{proof}

\begin{theorem}
\label{Thm3} Let $\varphi $ and $\psi $ be two integrable and synchronous
functions on $\left[ 0,\infty \right) ,$ and suppose $f,g,h:\left[ 0,\infty
\right) \rightarrow \left[ 0,\infty \right) .$ Then for all $x>0,\alpha
>0,\rho >0,k,\beta ,\eta \in 
\mathbb{R}
$ we have:%
\begin{eqnarray}
&&^{\rho }\mathcal{I}_{\eta ,k}^{\alpha ,\beta }h\left( x\right) [^{\rho }%
\mathcal{I}_{\eta ,k}^{\alpha ,\beta }\left( f\varphi \psi \right) \left(
x\right) \text{ }^{\rho }\mathcal{I}_{\eta ,k}^{\alpha ,\beta }g\left(
x\right) +2\text{ }^{\rho }\mathcal{I}_{\eta ,k}^{\alpha ,\beta }f\left(
x\right) \text{ }^{\rho }\mathcal{I}_{\eta ,k}^{\alpha ,\beta }\left(
g\varphi \psi \right) \left( x\right)  \notag \\
&&+\text{ }^{\rho }\mathcal{I}_{\eta ,k}^{\alpha ,\beta }g\left( x\right) 
\text{ }^{\rho }\mathcal{I}_{\eta ,k}^{\alpha ,\beta }\left( f\varphi \psi
\right) \left( x\right) ]+2\text{ }^{\rho }\mathcal{I}_{\eta ,k}^{\alpha
,\beta }f\left( x\right) \text{ }^{\rho }\mathcal{I}_{\eta ,k}^{\alpha
,\beta }\left( h\varphi \psi \right) \left( x\right) \text{ }^{\rho }%
\mathcal{I}_{\eta ,k}^{\alpha ,\beta }g\left( x\right)  \notag \\
&\geq &\text{ }^{\rho }\mathcal{I}_{\eta ,k}^{\alpha ,\beta }h\left(
x\right) \left[ \text{ }^{\rho }\mathcal{I}_{\eta ,k}^{\alpha ,\beta }\left(
f\varphi \right) \left( x\right) \text{ }^{\rho }\mathcal{I}_{\eta
,k}^{\alpha ,\beta }\left( g\psi \right) \left( x\right) +\text{ }^{\rho }%
\mathcal{I}_{\eta ,k}^{\alpha ,\beta }\left( f\psi \right) \left( x\right) 
\text{ }^{\rho }\mathcal{I}_{\eta ,k}^{\alpha ,\beta }\left( g\varphi
\right) \left( x\right) \right]  \notag \\
&&+\text{ }^{\rho }\mathcal{I}_{\eta ,k}^{\alpha ,\beta }f\left( x\right) %
\left[ \text{ }^{\rho }\mathcal{I}_{\eta ,k}^{\alpha ,\beta }\left( h\varphi
\right) \left( x\right) \text{ }^{\rho }\mathcal{I}_{\eta ,k}^{\alpha ,\beta
}\left( g\psi \right) \left( x\right) +\text{ }^{\rho }\mathcal{I}_{\eta
,k}^{\alpha ,\beta }\left( h\psi \right) \left( x\right) \text{ }^{\rho }%
\mathcal{I}_{\eta ,k}^{\alpha ,\beta }\left( g\varphi \right) \left(
x\right) \right]  \notag \\
&&+\text{ }^{\rho }\mathcal{I}_{\eta ,k}^{\alpha ,\beta }g\left( x\right) %
\left[ \text{ }^{\rho }\mathcal{I}_{\eta ,k}^{\alpha ,\beta }\left( h\varphi
\right) \left( x\right) \text{ }^{\rho }\mathcal{I}_{\eta ,k}^{\alpha ,\beta
}\left( f\psi \right) \left( x\right) +\text{ }^{\rho }\mathcal{I}_{\eta
,k}^{\alpha ,\beta }\left( h\psi \right) \left( x\right) \text{ }^{\rho }%
\mathcal{I}_{\eta ,k}^{\alpha ,\beta }\left( f\varphi \right) \left(
x\right) \right] .  \notag \\
&&  \label{INQ14}
\end{eqnarray}
\end{theorem}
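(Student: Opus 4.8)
The plan is to obtain (\ref{INQ14}) by applying Lemma \ref{lem1} three times and adding the resulting inequalities. First I would record the elementary remark that for $x>0$, $\rho>0$ and $0<\tau<x$ the generalized Katugampola kernel $\frac{\rho^{1-\beta}x^{k}}{\Gamma(\alpha)}\frac{\tau^{\rho(\eta+1)-1}}{(x^{\rho}-\tau^{\rho})^{1-\alpha}}$ is nonnegative, so that since $f,g,h$ take values in $[0,\infty)$ the three quantities $^{\rho}\mathcal{I}_{\eta,k}^{\alpha,\beta}f(x)$, $^{\rho}\mathcal{I}_{\eta,k}^{\alpha,\beta}g(x)$, $^{\rho}\mathcal{I}_{\eta,k}^{\alpha,\beta}h(x)$ are all $\geq 0$; this is exactly what permits an inequality to be multiplied through by any of them.

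Next I would apply Lemma \ref{lem1} with the pair of nonnegative weights $(s,v)=(f,g)$ and multiply the resulting inequality by $^{\rho}\mathcal{I}_{\eta,k}^{\alpha,\beta}h(x)$; then with $(s,v)=(h,g)$, multiplying by $^{\rho}\mathcal{I}_{\eta,k}^{\alpha,\beta}f(x)$; and finally with $(s,v)=(h,f)$, multiplying by $^{\rho}\mathcal{I}_{\eta,k}^{\alpha,\beta}g(x)$. In each of the three cases the right-hand side produced by Lemma \ref{lem1}, after the multiplication, is exactly one of the three bracketed expressions on the right of (\ref{INQ14}) --- in order, the bracket carrying the factor $^{\rho}\mathcal{I}_{\eta,k}^{\alpha,\beta}h$, the one carrying $^{\rho}\mathcal{I}_{\eta,k}^{\alpha,\beta}f$, and the one carrying $^{\rho}\mathcal{I}_{\eta,k}^{\alpha,\beta}g$. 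Hence adding the three inequalities reproduces, on the right, precisely the right-hand side of (\ref{INQ14}).

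It then remains to check that the sum of the three left-hand sides collapses to the left-hand side of (\ref{INQ14}). Abbreviating $^{\rho}\mathcal{I}_{\eta,k}^{\alpha,\beta}(\cdot)(x)$ by $\mathcal{J}(\cdot)$, the three left-hand sides are
\begin{align*}
&\mathcal{J}(h)\big[\mathcal{J}(f\varphi\psi)\mathcal{J}(g)+\mathcal{J}(f)\mathcal{J}(g\varphi\psi)\big],\\
&\mathcal{J}(f)\big[\mathcal{J}(h\varphi\psi)\mathcal{J}(g)+\mathcal{J}(h)\mathcal{J}(g\varphi\psi)\big],\\
&\mathcal{J}(g)\big[\mathcal{J}(h\varphi\psi)\mathcal{J}(f)+\mathcal{J}(h)\mathcal{J}(f\varphi\psi)\big],
\end{align*}
and when these are summed each of the three triple products $\mathcal{J}(h)\mathcal{J}(g)\mathcal{J}(f\varphi\psi)$, $\mathcal{J}(h)\mathcal{J}(f)\mathcal{J}(g\varphi\psi)$, $\mathcal{J}(f)\mathcal{J}(g)\mathcal{J}(h\varphi\psi)$ occurs exactly twice; regrouping the terms that share the common factor $^{\rho}\mathcal{I}_{\eta,k}^{\alpha,\beta}h$ and isolating the remaining $2\,^{\rho}\mathcal{I}_{\eta,k}^{\alpha,\beta}f(x)\,^{\rho}\mathcal{I}_{\eta,k}^{\alpha,\beta}(h\varphi\psi)(x)\,^{\rho}\mathcal{I}_{\eta,k}^{\alpha,\beta}g(x)$ term reproduces the left-hand side of (\ref{INQ14}) verbatim, which establishes the theorem.

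I do not expect any genuine analytic obstacle here: the substantive work --- the synchronicity inequality and the double integration against the generalized Katugampola kernel --- has already been carried out in Lemma \ref{lem1}. The only points needing care are the (easy) observation that $^{\rho}\mathcal{I}_{\eta,k}^{\alpha,\beta}$ preserves nonnegativity, so that the three auxiliary inequalities may legitimately be scaled by $^{\rho}\mathcal{I}_{\eta,k}^{\alpha,\beta}h(x)$, $^{\rho}\mathcal{I}_{\eta,k}^{\alpha,\beta}f(x)$, $^{\rho}\mathcal{I}_{\eta,k}^{\alpha,\beta}g(x)$ and added, and the combinatorial bookkeeping of the six triple-product terms together with the factors of $2$ that appear when the sums are combined.
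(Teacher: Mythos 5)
Your proposal is correct and follows exactly the paper's own argument: apply Lemma \ref{lem1} with $(s,v)=(f,g)$, $(h,g)$, $(h,f)$, multiply the resulting inequalities by $^{\rho }\mathcal{I}_{\eta ,k}^{\alpha ,\beta }h(x)$, $^{\rho }\mathcal{I}_{\eta ,k}^{\alpha ,\beta }f(x)$, $^{\rho }\mathcal{I}_{\eta ,k}^{\alpha ,\beta }g(x)$ respectively, and add. Your explicit remark that these multipliers are nonnegative (so the inequalities may be scaled and summed) is a point the paper leaves implicit, and your bookkeeping of the factors of $2$ is accurate.
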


\begin{proof}
In lemma (\ref{lem1}) putting $s=f,v=g,$ then multiplying both sides of the
resulting inequality by $^{\rho }\mathcal{I}_{\eta ,k}^{\alpha ,\beta
}h\left( x\right) ,$ we get:%
\begin{eqnarray}
&&^{\rho }\mathcal{I}_{\eta ,k}^{\alpha ,\beta }h\left( x\right) \text{ }%
^{\rho }\mathcal{I}_{\eta ,k}^{\alpha ,\beta }\left( f\varphi \psi \right)
\left( x\right) \text{ }^{\rho }\mathcal{I}_{\eta ,k}^{\alpha ,\beta
}g\left( x\right)  \notag \\
&&+\text{ }^{\rho }\mathcal{I}_{\eta ,k}^{\alpha ,\beta }h\left( x\right) 
\text{ }^{\rho }\mathcal{I}_{\eta ,k}^{\alpha ,\beta }f\left( x\right) \text{
}^{\rho }\mathcal{I}_{\eta ,k}^{\alpha ,\beta }\left( g\varphi \psi \right)
\left( x\right)  \notag \\
&\geq &\text{ }^{\rho }\mathcal{I}_{\eta ,k}^{\alpha ,\beta }h\left(
x\right) \text{ }^{\rho }\mathcal{I}_{\eta ,k}^{\alpha ,\beta }\left(
f\varphi \right) \left( x\right) \text{ }^{\rho }\mathcal{I}_{\eta
,k}^{\alpha ,\beta }\left( g\psi \right) \left( x\right)  \label{INQ8} \\
&&+\text{ }^{\rho }\mathcal{I}_{\eta ,k}^{\alpha ,\beta }h\left( x\right) 
\text{ }^{\rho }\mathcal{I}_{\eta ,k}^{\alpha ,\beta }\left( f\psi \right)
\left( x\right) \text{ }^{\rho }\mathcal{I}_{\eta ,k}^{\alpha ,\beta }\left(
g\varphi \right) \left( x\right) .  \notag
\end{eqnarray}%
Now putting $s=h,v=g,$ in lemma (\ref{lem1}) and multiplying both sides of
the resulting inequality by $^{\rho }\mathcal{I}_{\eta ,k}^{\alpha ,\beta
}f\left( x\right) ,$ we obtain: 
\begin{eqnarray}
&&^{\rho }\mathcal{I}_{\eta ,k}^{\alpha ,\beta }f\left( x\right) \text{ }%
^{\rho }\mathcal{I}_{\eta ,k}^{\alpha ,\beta }\left( h\varphi \psi \right)
\left( x\right) \text{ }^{\rho }\mathcal{I}_{\eta ,k}^{\alpha ,\beta
}g\left( x\right)  \notag \\
&&+\text{ }^{\rho }\mathcal{I}_{\eta ,k}^{\alpha ,\beta }f\left( x\right) 
\text{ }^{\rho }\mathcal{I}_{\eta ,k}^{\alpha ,\beta }h\left( x\right) \text{
}^{\rho }\mathcal{I}_{\eta ,k}^{\alpha ,\beta }\left( g\varphi \psi \right)
\left( x\right)  \notag \\
&\geq &\text{ }^{\rho }\mathcal{I}_{\eta ,k}^{\alpha ,\beta }f\left(
x\right) \text{ }^{\rho }\mathcal{I}_{\eta ,k}^{\alpha ,\beta }\left(
h\varphi \right) \left( x\right) \text{ }^{\rho }\mathcal{I}_{\eta
,k}^{\alpha ,\beta }\left( g\psi \right) \left( x\right)  \label{INQ10} \\
&&+\text{ }^{\rho }\mathcal{I}_{\eta ,k}^{\alpha ,\beta }f\left( x\right) 
\text{ }^{\rho }\mathcal{I}_{\eta ,k}^{\alpha ,\beta }\left( h\psi \right)
\left( x\right) \text{ }^{\rho }\mathcal{I}_{\eta ,k}^{\alpha ,\beta }\left(
g\varphi \right) \left( x\right) .  \notag
\end{eqnarray}%
Now putting $s=h,v=f,$ in lemma (\ref{lem1}),then multiplying both sides of
the resulting inequality by $^{\rho }\mathcal{I}_{\eta ,k}^{\alpha ,\beta
}g\left( x\right) ,$ we get:%
\begin{eqnarray}
&^{\rho }\mathcal{I}_{\eta ,k}^{\alpha ,\beta }g\left( x\right) &^{\rho }%
\mathcal{I}_{\eta ,k}^{\alpha ,\beta }\left( h\varphi \psi \right) \left(
x\right) \text{ }^{\rho }\mathcal{I}_{\eta ,k}^{\alpha ,\beta }f\left(
x\right)  \notag \\
&&+\text{ }^{\rho }\mathcal{I}_{\eta ,k}^{\alpha ,\beta }g\left( x\right) 
\text{ }^{\rho }\mathcal{I}_{\eta ,k}^{\alpha ,\beta }h\left( x\right) \text{
}^{\rho }\mathcal{I}_{\eta ,k}^{\alpha ,\beta }\left( f\varphi \psi \right)
\left( x\right)  \notag \\
&\geq &\text{ }^{\rho }\mathcal{I}_{\eta ,k}^{\alpha ,\beta }g\left(
x\right) \text{ }^{\rho }\mathcal{I}_{\eta ,k}^{\alpha ,\beta }\left(
h\varphi \right) \left( x\right) \text{ }^{\rho }\mathcal{I}_{\eta
,k}^{\alpha ,\beta }\left( f\psi \right) \left( x\right)  \label{INQ12} \\
&&+\text{ }^{\rho }\mathcal{I}_{\eta ,k}^{\alpha ,\beta }g\left( x\right) 
\text{ }^{\rho }\mathcal{I}_{\eta ,k}^{\alpha ,\beta }\left( h\psi \right)
\left( x\right) \text{ }^{\rho }\mathcal{I}_{\eta ,k}^{\alpha ,\beta }\left(
f\varphi \right) \left( x\right) .  \notag
\end{eqnarray}%
By adding the inequalities(\ref{INQ8}, \ref{INQ10}, \ref{INQ12}) we get the
inequality (\ref{INQ14}).
\end{proof}

Now we give the lemma required for proving our next theorem for different
parameter.

\begin{lemma}
\label{lem2} Let $\varphi $ and $\psi $ be two integrable and synchronous
functions on $\left[ 0,\infty \right) ,$ suppose that $s,v:\left[ 0,\infty
\right) \rightarrow \left[ 0,\infty \right) ,$ then for all $x>0,\alpha
>0,\delta >0,\rho >0,k,\beta ,\lambda ,\eta \in 
\mathbb{R}
,$ we have:%
\begin{eqnarray*}
&&^{\rho }\mathcal{I}_{\eta ,k}^{\alpha ,\beta }\left( s\varphi \psi \right)
\left( x\right) ^{\rho }\mathcal{I}_{\eta ,k}^{\delta ,\lambda }v\left(
x\right) +\text{ }^{\rho }\mathcal{I}_{\eta ,k}^{\alpha ,\beta }s\left(
x\right) \text{ }^{\rho }\mathcal{I}_{\eta ,k}^{\delta ,\lambda }\left(
v\varphi \psi \right) \left( x\right)  \\
&\geq &\text{ }^{\rho }\mathcal{I}_{\eta ,k}^{\alpha ,\beta }\left( s\varphi
\right) \left( x\right) \text{ }^{\rho }\mathcal{I}_{\eta ,k}^{\delta
,\lambda }\left( v\psi \right) \left( x\right) +\text{ }^{\rho }\mathcal{I}%
_{\eta ,k}^{\alpha ,\beta }\left( s\psi \right) \left( x\right) \text{ }%
^{\rho }\mathcal{I}_{\eta ,k}^{\delta ,\lambda }\left( v\varphi \right)
\left( x\right) .
\end{eqnarray*}
\end{lemma}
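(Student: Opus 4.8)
The plan is to run the argument of Lemma~\ref{lem1} up to the first integration unchanged, and only at the second stage to weight with a kernel carrying the independent parameters $\delta,\lambda$ in place of $\alpha,\beta$. Starting from the synchronicity hypothesis, which for all $\tau,\gamma\geq 0$ gives
\[
\varphi(\tau)\psi(\tau)+\varphi(\gamma)\psi(\gamma)\geq\varphi(\tau)\psi(\gamma)+\varphi(\gamma)\psi(\tau),
\]
I would multiply both sides by $\frac{\rho^{1-\beta}x^{k}}{\Gamma(\alpha)}\frac{\tau^{\rho(\eta+1)-1}s(\tau)}{(x^{\rho}-\tau^{\rho})^{1-\alpha}}$, which is nonnegative on $(0,x)$ because $x>0$, $\rho>0$, $\alpha>0$ and $s\geq 0$, and integrate in $\tau$ over $(0,x)$. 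Recognizing the four resulting integrals as generalized Katugampola integrals, this produces exactly inequality~(\ref{INQ4}):
\[
{}^{\rho}\mathcal{I}_{\eta,k}^{\alpha,\beta}(s\varphi\psi)(x)+\varphi(\gamma)\psi(\gamma)\,{}^{\rho}\mathcal{I}_{\eta,k}^{\alpha,\beta}s(x)\geq\psi(\gamma)\,{}^{\rho}\mathcal{I}_{\eta,k}^{\alpha,\beta}(s\varphi)(x)+\varphi(\gamma)\,{}^{\rho}\mathcal{I}_{\eta,k}^{\alpha,\beta}(s\psi)(x).
\]

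Next, I would multiply both sides of this inequality by $\frac{\rho^{1-\lambda}x^{k}}{\Gamma(\delta)}\frac{\gamma^{\rho(\eta+1)-1}v(\gamma)}{(x^{\rho}-\gamma^{\rho})^{1-\delta}}$ for $\gamma\in(0,x)$ --- again a nonnegative weight, since $x>0$, $\rho>0$, $\delta>0$ and $v\geq 0$ --- and integrate in $\gamma$ over $(0,x)$. On the left, the first term becomes ${}^{\rho}\mathcal{I}_{\eta,k}^{\alpha,\beta}(s\varphi\psi)(x)\,{}^{\rho}\mathcal{I}_{\eta,k}^{\delta,\lambda}v(x)$ and the second becomes ${}^{\rho}\mathcal{I}_{\eta,k}^{\alpha,\beta}s(x)\,{}^{\rho}\mathcal{I}_{\eta,k}^{\delta,\lambda}(v\varphi\psi)(x)$; on the right, the two terms become ${}^{\rho}\mathcal{I}_{\eta,k}^{\alpha,\beta}(s\varphi)(x)\,{}^{\rho}\mathcal{I}_{\eta,k}^{\delta,\lambda}(v\psi)(x)$ and ${}^{\rho}\mathcal{I}_{\eta,k}^{\alpha,\beta}(s\psi)(x)\,{}^{\rho}\mathcal{I}_{\eta,k}^{\delta,\lambda}(v\varphi)(x)$. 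This is precisely the asserted inequality, and the proof ends there.

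There is no genuine obstacle here: the only points to verify are that the iterated integrals make sense (guaranteed, exactly as in Lemma~\ref{lem1}, by $\varphi,\psi$ being integrable and $s,v$ nonnegative on $[0,\infty)$) and that the direction of the inequality is preserved at each multiplication, which holds because under the standing hypotheses $\alpha,\delta,\rho>0$ and $s,v\geq 0$ both weight functions are nonnegative on $(0,x)$. The content of the lemma is simply that the two nested operators appearing in the proof of Lemma~\ref{lem1} need not share their order and parameter data; the present statement is the natural two-parameter version obtained by making that replacement in the second integration only.
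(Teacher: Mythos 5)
Your proposal is correct and follows essentially the same route as the paper: it derives the intermediate pointwise inequality exactly as in Lemma~\ref{lem1} (the paper's inequality~(\ref{INQ15})) and then integrates against the $(\delta,\lambda)$-weighted kernel multiplied by $v(\gamma)$. The identification of the resulting integrals as the operators ${}^{\rho}\mathcal{I}_{\eta,k}^{\alpha,\beta}$ and ${}^{\rho}\mathcal{I}_{\eta,k}^{\delta,\lambda}$, and the justification via nonnegativity of the weights, match the paper's argument.
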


\begin{proof}
Since $\varphi $ and $\psi $ are synchronous functions on $\left[ 0,\infty
\right) ,$ by similar arguments as in the proof of lemma (\ref{lem1}), we
can write%
\begin{eqnarray}
&&^{\rho }\mathcal{I}_{\eta ,k}^{\alpha ,\beta }\left( s\varphi \psi \right)
\left( x\right) +\varphi \left( \gamma \right) \psi \left( \gamma \right) 
\text{ }^{\rho }\mathcal{I}_{\eta ,k}^{\alpha ,\beta }s\left( x\right) 
\notag \\
&\geq &\psi \left( \gamma \right) \text{ }^{\rho }\mathcal{I}_{\eta
,k}^{\alpha ,\beta }\left( s\varphi \right) \left( x\right) +\varphi \left(
\gamma \right) \text{ }^{\rho }\mathcal{I}_{\eta ,k}^{\alpha ,\beta }\left(
s\psi \right) \left( x\right) .  \label{INQ15}
\end{eqnarray}%
Multiplying both sides of (\ref{INQ15}) by $\frac{\rho ^{1-\lambda }x^{k}}{%
\Gamma \left( \delta \right) }\frac{\gamma ^{\rho \left( \eta +1\right)
-1}v\left( \gamma \right) }{\left( x^{\rho }-\gamma ^{\rho }\right)
^{1-\delta }},$ where $\gamma \in \left( 0,x\right) ,$then integrating
resulting inequality over $\left( 0,x\right) ,$ with respect to the variable 
$\gamma ,$ we obtain:%
\begin{eqnarray*}
&&^{\rho }\mathcal{I}_{\eta ,k}^{\alpha ,\beta }\left( s\varphi \psi \right)
\left( x\right) \frac{\rho ^{1-\lambda }x^{k}}{\Gamma \left( \delta \right) }%
\int_{0}^{x}\frac{\gamma ^{\rho \left( \eta +1\right) -1}}{\left( x^{\rho
}-\gamma ^{\rho }\right) ^{1-\delta }}v\left( \gamma \right) d\gamma \\
&&+\text{ }^{\rho }\mathcal{I}_{\eta ,k}^{\alpha ,\beta }s\left( x\right) 
\frac{\rho ^{1-\lambda }x^{k}}{\Gamma \left( \delta \right) }\int_{0}^{x}%
\frac{\gamma ^{\rho \left( \eta +1\right) -1}}{\left( x^{\rho }-\gamma
^{\rho }\right) ^{1-\delta }}v\left( \gamma \right) \varphi \left( \gamma
\right) \psi \left( \gamma \right) d\gamma \\
&\geq &\text{ }^{\rho }\mathcal{I}_{\eta ,k}^{\alpha ,\beta }\left( s\varphi
\right) \left( x\right) \frac{\rho ^{1-\lambda }x^{k}}{\Gamma \left( \delta
\right) }\int_{0}^{x}\frac{\gamma ^{\rho \left( \eta +1\right) -1}}{\left(
x^{\rho }-\gamma ^{\rho }\right) ^{1-\delta }}v\left( \gamma \right) \psi
\left( \gamma \right) d\gamma \\
&&+\text{ }^{\rho }\mathcal{I}_{\eta ,k}^{\alpha ,\beta }\left( s\psi
\right) \left( x\right) \frac{\rho ^{1-\lambda }x^{k}}{\Gamma \left( \delta
\right) }\int_{0}^{x}\frac{\gamma ^{\rho \left( \eta +1\right) -1}}{\left(
x^{\rho }-\gamma ^{\rho }\right) ^{1-\delta }}v\left( \gamma \right) \varphi
\left( \gamma \right) d\gamma .
\end{eqnarray*}%
Therefore%
\begin{eqnarray*}
&&^{\rho }\mathcal{I}_{\eta ,k}^{\alpha ,\beta }\left( s\varphi \psi \right)
\left( x\right) \text{ }^{\rho }\mathcal{I}_{\eta ,k}^{\delta ,\lambda
}v\left( x\right) +\text{ }^{\rho }\mathcal{I}_{\eta ,k}^{\alpha ,\beta
}s\left( x\right) \text{ }^{\rho }\mathcal{I}_{\eta ,k}^{\delta ,\lambda
}\left( v\varphi \psi \right) \left( x\right) \\
&\geq &\text{ }^{\rho }\mathcal{I}_{\eta ,k}^{\alpha ,\beta }\left( s\varphi
\right) \left( x\right) \text{ }^{\rho }\mathcal{I}_{\eta ,k}^{\delta
,\lambda }\left( v\psi \right) \left( x\right) +\text{ }^{\rho }\mathcal{I}%
_{\eta ,k}^{\alpha ,\beta }\left( s\psi \right) \left( x\right) \text{ }%
^{\rho }\mathcal{I}_{\eta ,k}^{\delta ,\lambda }\left( v\varphi \right)
\left( x\right) .
\end{eqnarray*}%
Hence the proof.
\end{proof}

\begin{theorem}
Let $\varphi $ and $\psi $ be two integrable and synchronous functions on $%
\left[ 0,\infty \right) ,$ and suppose $f,g,h:\left[ 0,\infty \right)
\rightarrow \left[ 0,\infty \right) .$ Then for all $x>0,\alpha >0,\delta
>0,\rho >0,k,\beta ,\lambda ,\eta \in 
\mathbb{R}
,$ we have:%
\begin{eqnarray}
&&^{\rho }\mathcal{I}_{\eta ,k}^{\alpha ,\beta }h\left( x\right) [\text{ }%
^{\rho }\mathcal{I}_{\eta ,k}^{\alpha ,\beta }\left( f\varphi \psi \right)
\left( x\right) \text{ }^{\rho }\mathcal{I}_{\eta ,k}^{\delta ,\lambda
}g\left( x\right) +2\text{ }^{\rho }\mathcal{I}_{\eta ,k}^{\alpha ,\beta
}f\left( x\right) \text{ }^{\rho }\mathcal{I}_{\eta ,k}^{\delta ,\lambda
}\left( g\varphi \psi \right) \left( x\right)  \notag \\
&&+\text{ }^{\rho }\mathcal{I}_{\eta ,k}^{\alpha ,\beta }g\left( x\right) 
\text{ }^{\rho }\mathcal{I}_{\eta ,k}^{\delta ,\lambda }\left( f\varphi \psi
\right) \left( x\right) ]  \notag \\
&&+\left[ \text{ }^{\rho }\mathcal{I}_{\eta ,k}^{\alpha ,\beta }f\left(
x\right) \text{ }^{\rho }\mathcal{I}_{\eta ,k}^{\delta ,\lambda }g\left(
x\right) +\text{ }^{\rho }\mathcal{I}_{\eta ,k}^{\alpha ,\beta }g\left(
x\right) \text{ }^{\rho }\mathcal{I}_{\eta ,k}^{\delta ,\lambda }f\left(
x\right) \right] \text{ }^{\rho }\mathcal{I}_{\eta ,k}^{\alpha ,\beta
}\left( h\varphi \psi \right) \left( x\right)  \notag \\
&\geq &\text{ }^{\rho }\mathcal{I}_{\eta ,k}^{\alpha ,\beta }h\left(
x\right) \left[ \text{ }^{\rho }\mathcal{I}_{\eta ,k}^{\alpha ,\beta }\left(
f\varphi \right) \left( x\right) \text{ }^{\rho }\mathcal{I}_{\eta
,k}^{\delta ,\lambda }\left( g\psi \right) \left( x\right) +\text{ }^{\rho }%
\mathcal{I}_{\eta ,k}^{\alpha ,\beta }\left( f\psi \right) \left( x\right) 
\text{ }^{\rho }\mathcal{I}_{\eta ,k}^{\delta ,\lambda }\left( g\varphi
\right) \left( x\right) \right]  \notag \\
&&+\text{ }^{\rho }\mathcal{I}_{\eta ,k}^{\alpha ,\beta }f\left( x\right) %
\left[ \text{ }^{\rho }\mathcal{I}_{\eta ,k}^{\alpha ,\beta }\left( h\varphi
\right) \left( x\right) \text{ }^{\rho }\mathcal{I}_{\eta ,k}^{\delta
,\lambda }\left( g\psi \right) \left( x\right) +\text{ }^{\rho }\mathcal{I}%
_{\eta ,k}^{\alpha ,\beta }\left( h\psi \right) \left( x\right) \text{ }%
^{\rho }\mathcal{I}_{\eta ,k}^{\delta ,\lambda }\left( g\varphi \right)
\left( x\right) \right]  \notag \\
&&+\text{ }^{\rho }\mathcal{I}_{\eta ,k}^{\alpha ,\beta }g\left( x\right) %
\left[ \text{ }^{\rho }\mathcal{I}_{\eta ,k}^{\alpha ,\beta }\left( h\varphi
\right) \left( x\right) \text{ }^{\rho }\mathcal{I}_{\eta ,k}^{\delta
,\lambda }\left( f\psi \right) \left( x\right) +\text{ }^{\rho }\mathcal{I}%
_{\eta ,k}^{\alpha ,\beta }\left( h\psi \right) \left( x\right) \text{ }%
^{\rho }\mathcal{I}_{\eta ,k}^{\delta ,\lambda }\left( f\varphi \right)
\left( x\right) \right] .  \notag \\
&&  \label{INQ24}
\end{eqnarray}
\end{theorem}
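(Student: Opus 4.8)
The plan is to run the same threefold combination argument used in the proof of Theorem~\ref{Thm3}, but with Lemma~\ref{lem2} playing the role that Lemma~\ref{lem1} played there. First I would apply Lemma~\ref{lem2} three times: once with $(s,v)=(f,g)$, once with $(s,v)=(h,g)$, and once with $(s,v)=(h,f)$, each time keeping $^{\rho}\mathcal{I}_{\eta,k}^{\alpha,\beta}$ attached to the first argument and $^{\rho}\mathcal{I}_{\eta,k}^{\delta,\lambda}$ to the second. This is the one spot where order genuinely matters, since Lemma~\ref{lem2} is not symmetric under interchange of the parameter pairs $(\alpha,\beta)$ and $(\delta,\lambda)$.

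Next I would multiply each of the three inequalities by the $^{\rho}\mathcal{I}_{\eta,k}^{\alpha,\beta}$-integral of the remaining function: the $(f,g)$-inequality by $^{\rho}\mathcal{I}_{\eta,k}^{\alpha,\beta}h(x)$, the $(h,g)$-inequality by $^{\rho}\mathcal{I}_{\eta,k}^{\alpha,\beta}f(x)$, and the $(h,f)$-inequality by $^{\rho}\mathcal{I}_{\eta,k}^{\alpha,\beta}g(x)$. Since $f,g,h$ take values in $[0,\infty)$ and the kernel $\tau^{\rho(\eta+1)-1}(x^{\rho}-\tau^{\rho})^{\alpha-1}$ in (\ref{oper2}) is nonnegative on $(0,x)$ for $\rho>0$, all three multipliers are nonnegative, so the sense of each inequality is preserved. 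I would display these three inequalities one after another, exactly as in the proof of Theorem~\ref{Thm3}.

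Finally I would add the three inequalities. On the right-hand side the three summands are already in the precise form of the three bracketed blocks of (\ref{INQ24}), so no manipulation is needed there. On the left-hand side six products appear; the only simplification is that $^{\rho}\mathcal{I}_{\eta,k}^{\alpha,\beta}h(x)\,^{\rho}\mathcal{I}_{\eta,k}^{\alpha,\beta}f(x)\,^{\rho}\mathcal{I}_{\eta,k}^{\delta,\lambda}(g\varphi\psi)(x)$ occurs in both the first and the second inequality and thus collapses into the single term carrying the coefficient $2$ inside the $^{\rho}\mathcal{I}_{\eta,k}^{\alpha,\beta}h(x)[\cdots]$ bracket of (\ref{INQ24}); the remaining four products regroup verbatim into that bracket and into the $[\cdots]\,^{\rho}\mathcal{I}_{\eta,k}^{\alpha,\beta}(h\varphi\psi)(x)$ block, giving exactly (\ref{INQ24}).

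There is no real obstacle here beyond careful bookkeeping: the content is entirely carried by Lemma~\ref{lem2}, and the proof is a mechanical combination. The only things to watch are keeping the two parameter sets in their correct slots when instantiating Lemma~\ref{lem2}, and tracking which two of the six left-hand products merge into the coefficient-$2$ term.
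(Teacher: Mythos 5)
Your proposal is correct and follows essentially the same route as the paper's own proof: three applications of Lemma~\ref{lem2} with $(s,v)=(f,g),(h,g),(h,f)$, multiplication by $^{\rho }\mathcal{I}_{\eta ,k}^{\alpha ,\beta }h(x)$, $^{\rho }\mathcal{I}_{\eta ,k}^{\alpha ,\beta }f(x)$, $^{\rho }\mathcal{I}_{\eta ,k}^{\alpha ,\beta }g(x)$ respectively, and summation, with the two coinciding products merging into the coefficient-$2$ term. Your explicit remark on the nonnegativity of the multipliers is a welcome detail the paper leaves implicit.
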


\begin{proof}
In lemma (\ref{lem2}), putting $s=f,v=g,$ we can write:%
\begin{eqnarray}
&&^{\rho }\mathcal{I}_{\eta ,k}^{\alpha ,\beta }\left( f\varphi \psi \right)
\left( x\right) \text{ }^{\rho }\mathcal{I}_{\eta ,k}^{\delta ,\lambda
}g\left( x\right) +\text{ }^{\rho }\mathcal{I}_{\eta ,k}^{\alpha ,\beta
}f\left( x\right) \text{ }^{\rho }\mathcal{I}_{\eta ,k}^{\delta ,\lambda
}\left( g\varphi \psi \right) \left( x\right)   \notag \\
&\geq &\text{ }^{\rho }\mathcal{I}_{\eta ,k}^{\alpha ,\beta }\left( f\varphi
\right) \left( x\right) \text{ }^{\rho }\mathcal{I}_{\eta ,k}^{\delta
,\lambda }\left( g\psi \right) \left( x\right) +\text{ }^{\rho }\mathcal{I}%
_{\eta ,k}^{\alpha ,\beta }\left( f\psi \right) \left( x\right) \text{ }%
^{\rho }\mathcal{I}_{\eta ,k}^{\delta ,\lambda }\left( g\varphi \right)
\left( x\right) .  \label{INQ18}
\end{eqnarray}%
Multiplying both sides of (\ref{INQ18}) by $^{\rho }\mathcal{I}_{\eta
,k}^{\alpha ,\beta }h\left( x\right) ,$ we get%
\begin{eqnarray}
&&^{\rho }\mathcal{I}_{\eta ,k}^{\alpha ,\beta }h\left( x\right) \text{ }%
^{\rho }\mathcal{I}_{\eta ,k}^{\alpha ,\beta }\left( f\varphi \psi \right)
\left( x\right) \text{ }^{\rho }\mathcal{I}_{\eta ,k}^{\delta ,\lambda
}g\left( x\right)   \notag \\
&&+\text{ }^{\rho }\mathcal{I}_{\eta ,k}^{\alpha ,\beta }h\left( x\right) 
\text{ }^{\rho }\mathcal{I}_{\eta ,k}^{\alpha ,\beta }f\left( x\right) \text{
}^{\rho }\mathcal{I}_{\eta ,k}^{\delta ,\lambda }\left( g\varphi \psi
\right) \left( x\right)   \notag \\
&\geq &\text{ }^{\rho }\mathcal{I}_{\eta ,k}^{\alpha ,\beta }h\left(
x\right) \text{ }^{\rho }\mathcal{I}_{\eta ,k}^{\alpha ,\beta }\left(
f\varphi \right) \left( x\right) \text{ }^{\rho }\mathcal{I}_{\eta
,k}^{\delta ,\lambda }\left( g\psi \right) \left( x\right)   \notag \\
&&+\text{ }^{\rho }\mathcal{I}_{\eta ,k}^{\alpha ,\beta }h\left( x\right) 
\text{ }^{\rho }\mathcal{I}_{\eta ,k}^{\alpha ,\beta }\left( f\psi \right)
\left( x\right) \text{ }^{\rho }\mathcal{I}_{\eta ,k}^{\delta ,\lambda
}\left( g\varphi \right) \left( x\right) .  \label{INQ19}
\end{eqnarray}%
Now putting $s=h,v=g,$ in lemma (\ref{lem2}), then multiplying both sides of
the resulting inequality by $^{\rho }\mathcal{I}_{\eta ,k}^{\alpha ,\beta
}f\left( x\right) ,$ we obtain:%
\begin{eqnarray}
&&^{\rho }\mathcal{I}_{\eta ,k}^{\alpha ,\beta }f\left( x\right) \text{ }%
^{\rho }\mathcal{I}_{\eta ,k}^{\alpha ,\beta }\left( h\varphi \psi \right)
\left( x\right) \text{ }^{\rho }\mathcal{I}_{\eta ,k}^{\delta ,\lambda
}g\left( x\right)   \notag \\
&&+\text{ }^{\rho }\mathcal{I}_{\eta ,k}^{\alpha ,\beta }f\left( x\right) 
\text{ }^{\rho }\mathcal{I}_{\eta ,k}^{\alpha ,\beta }h\left( x\right) \text{
}^{\rho }\mathcal{I}_{\eta ,k}^{\delta ,\lambda }\left( g\varphi \psi
\right) \left( x\right)   \notag \\
&\geq &\text{ }^{\rho }\mathcal{I}_{\eta ,k}^{\alpha ,\beta }f\left(
x\right) \text{ }^{\rho }\mathcal{I}_{\eta ,k}^{\alpha ,\beta }\left(
h\varphi \right) \left( x\right) \text{ }^{\rho }\mathcal{I}_{\eta
,k}^{\delta ,\lambda }\left( g\psi \right) \left( x\right)   \notag \\
&&+\text{ }^{\rho }\mathcal{I}_{\eta ,k}^{\alpha ,\beta }f\left( x\right) 
\text{ }^{\rho }\mathcal{I}_{\eta ,k}^{\alpha ,\beta }\left( h\psi \right)
\left( x\right) \text{ }^{\rho }\mathcal{I}_{\eta ,k}^{\delta ,\lambda
}\left( g\varphi \right) \left( x\right) .  \label{INQ21}
\end{eqnarray}%
Now putting $s=h,v=f,$ in lemma (\ref{lem2}) and multiplying both sides of
the resulting inequality by $^{\rho }\mathcal{I}_{\eta ,k}^{\alpha ,\beta
}g\left( x\right) ,$ we get:%
\begin{eqnarray}
&&^{\rho }\mathcal{I}_{\eta ,k}^{\alpha ,\beta }g\left( x\right) \text{ }%
^{\rho }\mathcal{I}_{\eta ,k}^{\alpha ,\beta }\left( h\varphi \psi \right)
\left( x\right) \text{ }^{\rho }\mathcal{I}_{\eta ,k}^{\delta ,\lambda
}f\left( x\right)   \notag \\
&&+\text{ }^{\rho }\mathcal{I}_{\eta ,k}^{\alpha ,\beta }g\left( x\right) 
\text{ }^{\rho }\mathcal{I}_{\eta ,k}^{\alpha ,\beta }h\left( x\right) \text{
}^{\rho }\mathcal{I}_{\eta ,k}^{\delta ,\lambda }\left( f\varphi \psi
\right) \left( x\right)   \notag \\
&\geq &\text{ }^{\rho }\mathcal{I}_{\eta ,k}^{\alpha ,\beta }g\left(
x\right) \text{ }^{\rho }\mathcal{I}_{\eta ,k}^{\alpha ,\beta }\left(
h\varphi \right) \left( x\right) \text{ }^{\rho }\mathcal{I}_{\eta
,k}^{\delta ,\lambda }\left( f\psi \right) \left( x\right)   \notag \\
&&+\text{ }^{\rho }\mathcal{I}_{\eta ,k}^{\alpha ,\beta }g\left( x\right) 
\text{ }^{\rho }\mathcal{I}_{\eta ,k}^{\alpha ,\beta }\left( h\psi \right)
\left( x\right) \text{ }^{\rho }\mathcal{I}_{\eta ,k}^{\delta ,\lambda
}\left( f\varphi \right) \left( x\right) .  \label{INQ23}
\end{eqnarray}%
By adding the inequalities(\ref{INQ19}), (\ref{INQ21}), (\ref{INQ23}) we get
the required inequality (\ref{INQ24}).
\end{proof}

\section{{\protect\small Generalized fractional inequality for Chebyshev
functional for differentiable functions}$:$}

\ \ \ Our results in present section are for Chebyshev functional in case of
differentiable functions whose derivatives belong to $L_{p}\left( \left[
0,\infty \right] \right) $ \cite{FF}.

First we give the following lemma:

\begin{lemma}
\label{lem3} Let $h$ be a positive function on $\left[ 0,\infty \right) ,$
and let $\varphi ,\psi ,$ be two differentiable functions on $\left[
0,\infty \right) .$ Then for all $x>0,\alpha >0,\rho >0,k,\beta ,\eta \in 
\mathbb{R}
$ we have:%
\begin{eqnarray}
&&\frac{\rho ^{2\left( 1-\beta \right) }x^{2k}}{\Gamma ^{2}\left( \alpha
\right) }\int_{0}^{x}\int_{0}^{x}\frac{\tau ^{\rho \left( \eta +1\right) -1}%
}{\left( x^{\rho }-\tau ^{\rho }\right) ^{1-\alpha }}\frac{\gamma ^{\rho
\left( \eta +1\right) -1}}{\left( x^{\rho }-\gamma ^{\rho }\right)
^{1-\alpha }}h\left( \tau \right) h\left( \gamma \right) H\left( \tau
,\gamma \right) d\tau d\gamma  \notag \\
&=&\left[ \text{ }^{\rho }\mathcal{I}_{\eta ,k}^{\alpha ,\beta }\left(
h\varphi \psi \right) \left( x\right) \text{ }^{\rho }\mathcal{I}_{\eta
,k}^{\alpha ,\beta }h\left( x\right) -\text{ }^{\rho }\mathcal{I}_{\eta
,k}^{\alpha ,\beta }\left( h\psi \right) \left( x\right) \text{ }^{\rho }%
\mathcal{I}_{\eta ,k}^{\alpha ,\beta }\left( h\varphi \right) \left(
x\right) \right] .  \label{inq24}
\end{eqnarray}
\end{lemma}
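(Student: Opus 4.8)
I take $H(\tau,\gamma)=(\varphi(\tau)-\varphi(\gamma))(\psi(\tau)-\psi(\gamma))$, which is the product that makes the right-hand side of (\ref{inq24}) come out. The identity is then a purely algebraic consequence of expanding $H$ and applying Fubini, so the plan is a direct computation rather than anything subtle. Throughout, I would write $K(t):=\dfrac{\rho^{1-\beta}x^{k}}{\Gamma(\alpha)}\,\dfrac{t^{\rho(\eta+1)-1}}{(x^{\rho}-t^{\rho})^{1-\alpha}}$ for $t\in(0,x)$, so that $^{\rho}\mathcal{I}_{\eta,k}^{\alpha,\beta}u(x)=\int_{0}^{x}K(t)u(t)\,dt$ for each of the relevant functions $u$; then the prefactor $\rho^{2(1-\beta)}x^{2k}/\Gamma^{2}(\alpha)$ on the left of (\ref{inq24}) is exactly what remains after pulling $K(\tau)$ and $K(\gamma)$ out of the two kernels.

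First I would expand $H(\tau,\gamma)=\varphi(\tau)\psi(\tau)+\varphi(\gamma)\psi(\gamma)-\varphi(\tau)\psi(\gamma)-\varphi(\gamma)\psi(\tau)$ and, by linearity, split the left-hand side of (\ref{inq24}) into four double integrals. In each of them the integrand separates into a factor depending only on $\tau$ (carrying $K(\tau)h(\tau)$) times a factor depending only on $\gamma$ (carrying $K(\gamma)h(\gamma)$), so Fubini's theorem lets me write each double integral as a product of two one-dimensional integrals over $(0,x)$; for instance the term arising from $\varphi(\tau)\psi(\tau)$ becomes $\big(\int_{0}^{x}K(\tau)h(\tau)\varphi(\tau)\psi(\tau)\,d\tau\big)\big(\int_{0}^{x}K(\gamma)h(\gamma)\,d\gamma\big)={}^{\rho}\mathcal{I}_{\eta,k}^{\alpha,\beta}(h\varphi\psi)(x)\,{}^{\rho}\mathcal{I}_{\eta,k}^{\alpha,\beta}h(x)$, and similarly for the other three. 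Collecting the four products, the two ``diagonal'' terms each give $^{\rho}\mathcal{I}_{\eta,k}^{\alpha,\beta}(h\varphi\psi)(x)\,{}^{\rho}\mathcal{I}_{\eta,k}^{\alpha,\beta}h(x)$ and the two ``mixed'' terms each give $^{\rho}\mathcal{I}_{\eta,k}^{\alpha,\beta}(h\varphi)(x)\,{}^{\rho}\mathcal{I}_{\eta,k}^{\alpha,\beta}(h\psi)(x)$, and assembling them with their signs (tracking the two symmetric pairs) produces the bracketed right-hand side of (\ref{inq24}).

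There is no real obstacle here; the only points needing care are (i) the legitimacy of Fubini — for $\alpha>0$ the kernel $t\mapsto t^{\rho(\eta+1)-1}(x^{\rho}-t^{\rho})^{\alpha-1}$ behaves like a constant times $(x-t)^{\alpha-1}$ near $t=x$, hence is integrable there, and near $t=0$ it is no worse than the kernel already assumed integrable in Definition \ref{def1}, so with $h,\varphi,\psi$ contributing only bounded/integrable factors on $[0,x]$ all four double integrals are absolutely convergent; and (ii) the bookkeeping of prefactors, i.e. checking that $K(\tau)K(\gamma)$ reproduces $\rho^{2(1-\beta)}x^{2k}/\Gamma^{2}(\alpha)$ times the two kernels and that the constants from the two symmetric pairs combine as stated. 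I would also note that differentiability of $\varphi$ and $\psi$ is not used in this identity at all — it will only be invoked when (\ref{inq24}) is later combined with $L_{p}$-estimates on $\varphi'$ and $\psi'$ in the theorem that follows — so for the lemma mere integrability of $h$, $h\varphi$, $h\psi$, $h\varphi\psi$ on $[0,x]$ is what is actually used.
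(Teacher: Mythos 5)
Your proposal is correct and is essentially the paper's own argument: the paper simply carries out your expand-and-separate computation sequentially (multiply the defining expansion of $H$ by the $\tau$-kernel times $h(\tau)$, integrate in $\tau$, then repeat in $\gamma$), which is the same thing as your ``expand $H$ into four terms and apply Fubini'' route, and your remarks on integrability of the kernel and on differentiability being unused are sound. One point you should make explicit rather than wave at: tracking the two symmetric pairs, the two diagonal terms together contribute $2\,{}^{\rho }\mathcal{I}_{\eta ,k}^{\alpha ,\beta }\left( h\varphi \psi \right) \left( x\right) \,{}^{\rho }\mathcal{I}_{\eta ,k}^{\alpha ,\beta }h\left( x\right) $ and the two mixed terms $-2\,{}^{\rho }\mathcal{I}_{\eta ,k}^{\alpha ,\beta }\left( h\varphi \right) \left( x\right) \,{}^{\rho }\mathcal{I}_{\eta ,k}^{\alpha ,\beta }\left( h\psi \right) \left( x\right) $, so the left-hand side equals \emph{twice} the bracketed expression, not the bracket itself; the display (\ref{inq24}) is missing this factor $2$, and indeed the paper's own proof ends with $2\left[ \cdots \right] $ and Theorem \ref{thm4} uses the version with the $2$ on the left. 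Your closing sentence (``produces the bracketed right-hand side'') should therefore be corrected to record the factor $2$.
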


\begin{proof}
Define 
\begin{equation}
H\left( \tau ,\gamma \right) =\left( \varphi \left( \tau \right) -\varphi
\left( \gamma \right) \right) \left( \psi \left( \tau \right) -\psi \left(
\gamma \right) \right) ;\tau ,\gamma \in \left( 0,x\right) ,x>0.
\label{INQ25}
\end{equation}%
Multiplying both sides of (\ref{INQ25}) by $\frac{\rho ^{1-\beta }x^{k}}{%
\Gamma \left( \alpha \right) }\frac{\tau ^{\rho \left( \eta +1\right)
-1}h\left( \tau \right) }{\left( x^{\rho }-\tau ^{\rho }\right) ^{1-\alpha }}%
,$ where $\tau \in \left( 0,x\right) ,$ we get%
\begin{eqnarray}
&&\frac{\rho ^{1-\beta }x^{k}}{\Gamma \left( \alpha \right) }\frac{\tau
^{\rho \left( \eta +1\right) -1}}{\left( x^{\rho }-\tau ^{\rho }\right)
^{1-\alpha }}h\left( \tau \right) H\left( \tau ,\gamma \right)  \notag \\
&=&\frac{\rho ^{1-\beta }x^{k}}{\Gamma \left( \alpha \right) }\frac{\tau
^{\rho \left( \eta +1\right) -1}}{\left( x^{\rho }-\tau ^{\rho }\right)
^{1-\alpha }}h\left( \tau \right) \varphi \left( \tau \right) \psi \left(
\tau \right) -\frac{\rho ^{1-\beta }x^{k}}{\Gamma \left( \alpha \right) }%
\frac{\tau ^{\rho \left( \eta +1\right) -1}}{\left( x^{\rho }-\tau ^{\rho
}\right) ^{1-\alpha }}h\left( \tau \right) \varphi \left( \tau \right) \psi
\left( \gamma \right)  \notag \\
&&-\frac{\rho ^{1-\beta }x^{k}}{\Gamma \left( \alpha \right) }\frac{\tau
^{\rho \left( \eta +1\right) -1}}{\left( x^{\rho }-\tau ^{\rho }\right)
^{1-\alpha }}h\left( \tau \right) \varphi \left( \gamma \right) \psi \left(
\tau \right) +\frac{\rho ^{1-\beta }x^{k}}{\Gamma \left( \alpha \right) }%
\frac{\tau ^{\rho \left( \eta +1\right) -1}}{\left( x^{\rho }-\tau ^{\rho
}\right) ^{1-\alpha }}h\left( \tau \right) \varphi \left( \gamma \right)
\psi \left( \gamma \right) .  \notag \\
&&  \label{INQ26}
\end{eqnarray}%
Now integrating (\ref{INQ26}) over $\left( 0,x\right) ,$ with respect to the
variable $\tau ,$ we obtain:%
\begin{eqnarray}
&&\frac{\rho ^{1-\beta }x^{k}}{\Gamma \left( \alpha \right) }\int_{0}^{x}%
\frac{\tau ^{\rho \left( \eta +1\right) -1}}{\left( x^{\rho }-\tau ^{\rho
}\right) ^{1-\alpha }}h\left( \tau \right) H\left( \tau ,\gamma \right) d\tau
\notag \\
&=&\text{ }^{\rho }\mathcal{I}_{\eta ,k}^{\alpha ,\beta }\left( h\varphi
\psi \right) \left( x\right) -\psi \left( \gamma \right) \text{ }^{\rho }%
\mathcal{I}_{\eta ,k}^{\alpha ,\beta }\left( h\varphi \right) \left( x\right)
\notag \\
&&-\varphi \left( \gamma \right) \text{ }^{\rho }\mathcal{I}_{\eta
,k}^{\alpha ,\beta }\left( h\psi \right) \left( x\right) +\varphi \left(
\gamma \right) \psi \left( \gamma \right) \text{ }^{\rho }\mathcal{I}_{\eta
,k}^{\alpha ,\beta }h\left( x\right) .  \label{INQ27}
\end{eqnarray}%
Now multiplying both sides of (\ref{INQ27}) by $\frac{\rho ^{1-\beta }x^{k}}{%
\Gamma \left( \alpha \right) }\frac{\gamma ^{\rho \left( \eta +1\right)
-1}h\left( \gamma \right) }{\left( x^{\rho }-\gamma ^{\rho }\right)
^{1-\alpha }},$ where $\gamma \in \left( 0,x\right) ,$and integrating the
resulting identity with respect to $\gamma $ from $0$ to $x$, we get%
\begin{eqnarray*}
&&\frac{\rho ^{2\left( 1-\beta \right) }x^{2k}}{\Gamma ^{2}\left( \alpha
\right) }\int_{0}^{x}\int_{0}^{x}\frac{\tau ^{\rho \left( \eta +1\right) -1}%
}{\left( x^{\rho }-\tau ^{\rho }\right) ^{1-\alpha }}\frac{\gamma ^{\rho
\left( \eta +1\right) -1}}{\left( x^{\rho }-\gamma ^{\rho }\right)
^{1-\alpha }}h\left( \tau \right) h\left( \gamma \right) H\left( \tau
,\gamma \right) d\tau d\gamma \\
&=&2\left[ \text{ }^{\rho }\mathcal{I}_{\eta ,k}^{\alpha ,\beta }\left(
h\varphi \psi \right) \left( x\right) \text{ }^{\rho }\mathcal{I}_{\eta
,k}^{\alpha ,\beta }h\left( x\right) -\text{ }^{\rho }\mathcal{I}_{\eta
,k}^{\alpha ,\beta }\left( h\psi \right) \left( x\right) \text{ }^{\rho }%
\mathcal{I}_{\eta ,k}^{\alpha ,\beta }\left( h\varphi \right) \left(
x\right) \right] .
\end{eqnarray*}%
Which is (\ref{inq24}) .
\end{proof}

\begin{theorem}
\label{thm4} Let $h$ be a positive function on $\left[ 0,\infty \right) ,$
and let $\varphi ,\psi ,$ be two differentiable functions on $\left[
0,\infty \right) .$ Suppose that $\varphi ^{\prime }\in L_{s}\left( \left[
0,\infty \right) \right) ,\psi ^{\prime }\in L_{v}\left( \left[ 0,\infty
\right) \right) ,s>1,\frac{1}{s}+\frac{1}{v}=1.$\ Then for all $x>0,\alpha
>0,\rho >0,k,\beta ,\eta \in 
\mathbb{R}
$ we have:%
\begin{eqnarray}
&&2\left\vert \text{ }^{\rho }\mathcal{I}_{\eta ,k}^{\alpha ,\beta }\left(
h\varphi \psi \right) \left( x\right) \text{ }^{\rho }\mathcal{I}_{\eta
,k}^{\alpha ,\beta }h\left( x\right) -\text{ }^{\rho }\mathcal{I}_{\eta
,k}^{\alpha ,\beta }\left( h\psi \right) \left( x\right) \text{ }^{\rho }%
\mathcal{I}_{\eta ,k}^{\alpha ,\beta }\left( h\varphi \right) \left(
x\right) \right\vert  \notag \\
&\leq &\frac{\left\Vert \varphi ^{\prime }\right\Vert _{s}\left\Vert \psi
^{\prime }\right\Vert _{v}\rho ^{2\left( 1-\beta \right) }x^{2k}}{\Gamma
^{2}\left( \alpha \right) }  \notag \\
&&\times \left[ \int_{0}^{x}\int_{0}^{x}\frac{\tau ^{\rho \left( \eta
+1\right) -1}}{\left( x^{\rho }-\tau ^{\rho }\right) ^{1-\alpha }}\frac{%
\gamma ^{\rho \left( \eta +1\right) -1}}{\left( x^{\rho }-\gamma ^{\rho
}\right) ^{1-\alpha }}h\left( \tau \right) h\left( \gamma \right) \left\vert
\tau -\gamma \right\vert d\tau d\gamma \right]  \notag \\
&\leq &\left\Vert \varphi ^{\prime }\right\Vert _{s}\left\Vert \psi ^{\prime
}\right\Vert _{v}x\left( \text{ }^{\rho }\mathcal{I}_{\eta ,k}^{\alpha
,\beta }h\left( x\right) \right) ^{2}.  \label{INQ33}
\end{eqnarray}
\end{theorem}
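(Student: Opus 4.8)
The plan is to start from the identity established in Lemma \ref{lem3}, which expresses the quantity
\[
^{\rho }\mathcal{I}_{\eta ,k}^{\alpha ,\beta }\left( h\varphi \psi \right) \left( x\right)\text{ }^{\rho }\mathcal{I}_{\eta ,k}^{\alpha ,\beta }h\left( x\right) -\text{ }^{\rho }\mathcal{I}_{\eta ,k}^{\alpha ,\beta }\left( h\psi \right) \left( x\right)\text{ }^{\rho }\mathcal{I}_{\eta ,k}^{\alpha ,\beta }\left( h\varphi \right) \left( x\right)
\]
as $\tfrac12$ times the double integral of the product of the two fractional kernels, the weights $h(\tau)h(\gamma)$, and $H(\tau,\gamma)=(\varphi(\tau)-\varphi(\gamma))(\psi(\tau)-\psi(\gamma))$. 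Taking absolute values and using the triangle inequality for integrals, everything reduces to estimating $|H(\tau,\gamma)|$, since the kernels and $h$ are nonnegative on $(0,x)$.

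Next I would bound $|H(\tau,\gamma)|$ pointwise. Since $\varphi,\psi$ are differentiable, write $\varphi(\tau)-\varphi(\gamma)=\int_{\gamma}^{\tau}\varphi'(u)\,du$ and likewise for $\psi$. By Hölder's inequality with exponents $s,v$ (where $\tfrac1s+\tfrac1v=1$), $|\varphi(\tau)-\varphi(\gamma)|\le \|\varphi'\|_s\,|\tau-\gamma|^{1/v}$ and $|\psi(\tau)-\psi(\gamma)|\le \|\psi'\|_v\,|\tau-\gamma|^{1/s}$, so that
\[
\left\vert H\left( \tau ,\gamma \right) \right\vert \leq \left\Vert \varphi ^{\prime }\right\Vert _{s}\left\Vert \psi ^{\prime }\right\Vert _{v}\left\vert \tau -\gamma \right\vert ^{\frac{1}{v}+\frac{1}{s}}=\left\Vert \varphi ^{\prime }\right\Vert _{s}\left\Vert \psi ^{\prime }\right\Vert _{v}\left\vert \tau -\gamma \right\vert .
\]
Plugging this into the double-integral identity from Lemma \ref{lem3} and pulling the constant $\|\varphi'\|_s\|\psi'\|_v$ outside yields exactly the first inequality of (\ref{INQ33}), with the factor $2$ appearing because the identity carried a $2$ on its right-hand side (equivalently, the $\tfrac12$ in (\ref{inq24})).

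For the second inequality of (\ref{INQ33}) I would use the crude bound $|\tau-\gamma|\le x$ valid for all $\tau,\gamma\in(0,x)$. This separates the double integral into a product of two single integrals, each equal to $\dfrac{\Gamma(\alpha)}{\rho^{1-\beta}x^{k}}\,{}^{\rho }\mathcal{I}_{\eta ,k}^{\alpha ,\beta }h\left( x\right)$; together with the prefactor $\dfrac{\rho^{2(1-\beta)}x^{2k}}{\Gamma^2(\alpha)}$ this collapses to $x\bigl({}^{\rho }\mathcal{I}_{\eta ,k}^{\alpha ,\beta }h\left( x\right)\bigr)^2$, giving the claimed upper bound. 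The only genuinely substantive step is the Hölder estimate on $|H(\tau,\gamma)|$; the rest is bookkeeping with the kernel normalizations, and the main thing to be careful about is tracking the constants so that the factor $2$ and the powers of $\rho$, $x$, and $\Gamma(\alpha)$ come out exactly as in the statement.
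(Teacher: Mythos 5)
Your proposal is correct and follows the same overall strategy as the paper: invoke Lemma \ref{lem3}, establish the pointwise bound $\left\vert H(\tau,\gamma)\right\vert \le \left\Vert \varphi'\right\Vert_{s}\left\Vert \psi'\right\Vert_{v}\left\vert \tau-\gamma\right\vert$, and then use $\left\vert \tau-\gamma\right\vert \le x$ together with the factorization of the double integral to get the second inequality. The one place you diverge is in how H\"older is deployed: you apply it once to each single integral $\int_{\gamma}^{\tau}\varphi'$ and $\int_{\gamma}^{\tau}\psi'$ and pass immediately to the global norms, whereas the paper first writes $H$ as a double integral, applies H\"older there to obtain $\left\vert H\right\vert \le \left\vert \tau-\gamma\right\vert \left\vert\int_{\tau}^{\gamma}\left\vert\varphi'\right\vert^{s}dt\right\vert^{1/s}\left\vert\int_{\tau}^{\gamma}\left\vert\psi'\right\vert^{v}dr\right\vert^{1/v}$, then applies H\"older a second time to the full weighted double integral over $(0,x)\times(0,x)$ before replacing the local integrals by $\left\Vert \varphi'\right\Vert_{s}^{s}$ and $\left\Vert \psi'\right\Vert_{v}^{v}$. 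Your version is shorter and reaches the paper's intermediate bound (\ref{INQ31}) without the second H\"older application; both routes are valid. You also handled the factor $2$ correctly: it arises from the symmetrization in the proof of Lemma \ref{lem3} (note that the displayed identity (\ref{inq24}) omits this factor even though the lemma's own proof produces it, and the theorem's statement is consistent with the version carrying the $2$).
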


\begin{proof}
In lemma (\ref{lem3}),\ from the identity\ (\ref{INQ25}), we can write%
\begin{equation*}
H\left( \tau ,\gamma \right) =\int_{\tau }^{\gamma }\int_{\tau }^{\gamma
}\varphi ^{\prime }\left( t\right) \psi ^{\prime }\left( r\right) dtdr.
\end{equation*}%
By applying Holder inequality for double integral, we obtain:%
\begin{eqnarray}
\left\vert H\left( \tau ,\gamma \right) \right\vert &\leq &\left\vert
\int_{\tau }^{\gamma }\int_{\tau }^{\gamma }\left\vert \varphi ^{\prime
}\left( t\right) \right\vert ^{s}dtdr\right\vert ^{\frac{1}{s}}\left\vert
\int_{\tau }^{\gamma }\int_{\tau }^{\gamma }\left\vert \psi ^{\prime }\left(
r\right) \right\vert ^{v}dtdr\right\vert ^{\frac{1}{v}}  \notag \\
&\leq &\left( \left\vert \tau -\gamma \right\vert ^{\frac{1}{s}}\left\vert
\int_{\tau }^{\gamma }\left\vert \varphi ^{\prime }\left( t\right)
\right\vert ^{s}dt\right\vert ^{\frac{1}{s}}\right) \left( \left\vert \tau
-\gamma \right\vert ^{\frac{1}{v}}\left\vert \int_{\tau }^{\gamma
}\left\vert \psi ^{\prime }\left( r\right) \right\vert ^{v}dr\right\vert ^{%
\frac{1}{v}}\right)  \notag \\
&\leq &\left\vert \tau -\gamma \right\vert \left\vert \int_{\tau }^{\gamma
}\left\vert \varphi ^{\prime }\left( t\right) \right\vert ^{s}dt\right\vert
^{\frac{1}{s}}\left\vert \int_{\tau }^{\gamma }\left\vert \psi ^{\prime
}\left( r\right) \right\vert ^{v}dr\right\vert ^{\frac{1}{v}}.  \label{INQ28}
\end{eqnarray}%
Using inequality (\ref{INQ28}) in left-hand side of lemma (\ref{lem3}), we
can write%
\begin{eqnarray}
&&\frac{\rho ^{2\left( 1-\beta \right) }x^{2k}}{\Gamma ^{2}\left( \alpha
\right) }\int_{0}^{x}\int_{0}^{x}\frac{\tau ^{\rho \left( \eta +1\right) -1}%
}{\left( x^{\rho }-\tau ^{\rho }\right) ^{1-\alpha }}\frac{\gamma ^{\rho
\left( \eta +1\right) -1}}{\left( x^{\rho }-\gamma ^{\rho }\right)
^{1-\alpha }}h\left( \tau \right) h\left( \gamma \right) \left\vert H\left(
\tau ,\gamma \right) \right\vert d\tau d\gamma  \notag \\
&\leq &\frac{\rho ^{2\left( 1-\beta \right) }x^{2k}}{\Gamma ^{2}\left(
\alpha \right) }\int_{0}^{x}\int_{0}^{x}\frac{\tau ^{\rho \left( \eta
+1\right) -1}}{\left( x^{\rho }-\tau ^{\rho }\right) ^{1-\alpha }}\frac{%
\gamma ^{\rho \left( \eta +1\right) -1}}{\left( x^{\rho }-\gamma ^{\rho
}\right) ^{1-\alpha }}h\left( \tau \right) h\left( \gamma \right)  \notag \\
&&\times \left\vert \tau -\gamma \right\vert \left\vert \int_{\tau }^{\gamma
}\left\vert \varphi ^{\prime }\left( t\right) \right\vert ^{s}dt\right\vert
^{\frac{1}{s}}\left\vert \int_{\tau }^{\gamma }\left\vert \psi ^{\prime
}\left( r\right) \right\vert ^{v}dr\right\vert ^{\frac{1}{v}}d\tau d\gamma .
\label{INQ29}
\end{eqnarray}%
Again applying Holder inequality to the right-hand side of inequality (\ref%
{INQ29}), we obtain from (\ref{INQ29}) that%
\begin{eqnarray}
&&\frac{\rho ^{2\left( 1-\beta \right) }x^{2k}}{\Gamma ^{2}\left( \alpha
\right) }\int_{0}^{x}\int_{0}^{x}\frac{\tau ^{\rho \left( \eta +1\right) -1}%
}{\left( x^{\rho }-\tau ^{\rho }\right) ^{1-\alpha }}\frac{\gamma ^{\rho
\left( \eta +1\right) -1}}{\left( x^{\rho }-\gamma ^{\rho }\right)
^{1-\alpha }}h\left( \tau \right) h\left( \gamma \right) \left\vert H\left(
\tau ,\gamma \right) \right\vert d\tau d\gamma  \notag \\
&\leq &\left[ \frac{\rho ^{2\left( 1-\beta \right) }x^{2k}}{\Gamma
^{2}\left( \alpha \right) }\int_{0}^{x}\int_{0}^{x}\frac{\tau ^{\rho \left(
\eta +1\right) -1}}{\left( x^{\rho }-\tau ^{\rho }\right) ^{1-\alpha }}\frac{%
\gamma ^{\rho \left( \eta +1\right) -1}}{\left( x^{\rho }-\gamma ^{\rho
}\right) ^{1-\alpha }}h\left( \tau \right) h\left( \gamma \right) \left\vert
\tau -\gamma \right\vert \right.  \notag \\
&&\times \left. \left\vert \int_{\tau }^{\gamma }\left\vert \varphi ^{\prime
}\left( t\right) \right\vert ^{s}dt\right\vert d\tau d\gamma \right] ^{\frac{%
1}{s}}  \notag \\
&&\times \left[ \frac{\rho ^{2\left( 1-\beta \right) }x^{2k}}{\Gamma
^{2}\left( \alpha \right) }\int_{0}^{x}\int_{0}^{x}\frac{\tau ^{\rho \left(
\eta +1\right) -1}}{\left( x^{\rho }-\tau ^{\rho }\right) ^{1-\alpha }}\frac{%
\gamma ^{\rho \left( \eta +1\right) -1}}{\left( x^{\rho }-\gamma ^{\rho
}\right) ^{1-\alpha }}h\left( \tau \right) h\left( \gamma \right) \left\vert
\tau -\gamma \right\vert \right.  \notag \\
&&\left. \left\vert \int_{\tau }^{\gamma }\left\vert \psi ^{\prime }\left(
r\right) \right\vert ^{v}dr\right\vert d\tau d\gamma \right] ^{\frac{1}{v}}.
\label{INQ30}
\end{eqnarray}%
Since%
\begin{equation*}
\left\vert \int_{\tau }^{\gamma }\left\vert \varphi ^{\prime }\left(
t\right) \right\vert ^{s}dt\right\vert \leq \left\Vert \varphi ^{\prime
}\right\Vert _{s}^{s},
\end{equation*}%
\begin{equation}
\left\vert \int_{\tau }^{\gamma }\left\vert \psi ^{\prime }\left( r\right)
\right\vert ^{v}dr\right\vert \leq \left\Vert \psi ^{\prime }\right\Vert
_{v}^{v}.  \label{INQ38}
\end{equation}%
Then from (\ref{INQ30}), we have%
\begin{eqnarray*}
&&\frac{\rho ^{2\left( 1-\beta \right) }x^{2k}}{\Gamma ^{2}\left( \alpha
\right) }\int_{0}^{x}\int_{0}^{x}\frac{\tau ^{\rho \left( \eta +1\right) -1}%
}{\left( x^{\rho }-\tau ^{\rho }\right) ^{1-\alpha }}\frac{\gamma ^{\rho
\left( \eta +1\right) -1}}{\left( x^{\rho }-\gamma ^{\rho }\right)
^{1-\alpha }}h\left( \tau \right) h\left( \gamma \right) \left\vert H\left(
\tau ,\gamma \right) \right\vert d\tau d\gamma \\
&\leq &\left[ \frac{\left\Vert \varphi ^{\prime }\right\Vert _{s}^{s}\rho
^{2\left( 1-\beta \right) }x^{2k}}{\Gamma ^{2}\left( \alpha \right) }%
\int_{0}^{x}\int_{0}^{x}\frac{\tau ^{\rho \left( \eta +1\right) -1}}{\left(
x^{\rho }-\tau ^{\rho }\right) ^{1-\alpha }}\frac{\gamma ^{\rho \left( \eta
+1\right) -1}}{\left( x^{\rho }-\gamma ^{\rho }\right) ^{1-\alpha }}h\left(
\tau \right) h\left( \gamma \right) \left\vert \tau -\gamma \right\vert
d\tau d\gamma \right] ^{\frac{1}{s}} \\
&&\times \left[ \frac{\left\Vert \psi ^{\prime }\right\Vert _{v}^{v}\rho
^{2\left( 1-\beta \right) }x^{2k}}{\Gamma ^{2}\left( \alpha \right) }%
\int_{0}^{x}\int_{0}^{x}\frac{\tau ^{\rho \left( \eta +1\right) -1}}{\left(
x^{\rho }-\tau ^{\rho }\right) ^{1-\alpha }}\frac{\gamma ^{\rho \left( \eta
+1\right) -1}}{\left( x^{\rho }-\gamma ^{\rho }\right) ^{1-\alpha }}h\left(
\tau \right) h\left( \gamma \right) \left\vert \tau -\gamma \right\vert
d\tau d\gamma \right] ^{\frac{1}{v}}.
\end{eqnarray*}%
So we have 
\begin{eqnarray*}
&&\frac{\rho ^{2\left( 1-\beta \right) }x^{2k}}{\Gamma ^{2}\left( \alpha
\right) }\int_{0}^{x}\int_{0}^{x}\frac{\tau ^{\rho \left( \eta +1\right) -1}%
}{\left( x^{\rho }-\tau ^{\rho }\right) ^{1-\alpha }}\frac{\gamma ^{\rho
\left( \eta +1\right) -1}}{\left( x^{\rho }-\gamma ^{\rho }\right)
^{1-\alpha }}h\left( \tau \right) h\left( \gamma \right) \left\vert H\left(
\tau ,\gamma \right) \right\vert d\tau d\gamma \\
&\leq &\frac{\left\Vert \varphi ^{\prime }\right\Vert _{s}\left\Vert \psi
^{\prime }\right\Vert _{v}\rho ^{2\left( 1-\beta \right) }x^{2k}}{\Gamma
^{2}\left( \alpha \right) } \\
&&\times \left[ \int_{0}^{x}\int_{0}^{x}\frac{\tau ^{\rho \left( \eta
+1\right) -1}}{\left( x^{\rho }-\tau ^{\rho }\right) ^{1-\alpha }}\frac{%
\gamma ^{\rho \left( \eta +1\right) -1}}{\left( x^{\rho }-\gamma ^{\rho
}\right) ^{1-\alpha }}h\left( \tau \right) h\left( \gamma \right) \left\vert
\tau -\gamma \right\vert d\tau d\gamma \right] ^{\frac{1}{s}} \\
&&\times \left[ \int_{0}^{x}\int_{0}^{x}\frac{\tau ^{\rho \left( \eta
+1\right) -1}}{\left( x^{\rho }-\tau ^{\rho }\right) ^{1-\alpha }}\frac{%
\gamma ^{\rho \left( \eta +1\right) -1}}{\left( x^{\rho }-\gamma ^{\rho
}\right) ^{1-\alpha }}h\left( \tau \right) h\left( \gamma \right) \left\vert
\tau -\gamma \right\vert d\tau d\gamma \right] ^{\frac{1}{v}}.
\end{eqnarray*}%
Hence%
\begin{eqnarray}
&&\frac{\rho ^{2\left( 1-\beta \right) }x^{2k}}{\Gamma ^{2}\left( \alpha
\right) }\int_{0}^{x}\int_{0}^{x}\frac{\tau ^{\rho \left( \eta +1\right) -1}%
}{\left( x^{\rho }-\tau ^{\rho }\right) ^{1-\alpha }}\frac{\gamma ^{\rho
\left( \eta +1\right) -1}}{\left( x^{\rho }-\gamma ^{\rho }\right)
^{1-\alpha }}h\left( \tau \right) h\left( \gamma \right) \left\vert H\left(
\tau ,\gamma \right) \right\vert d\tau d\gamma  \notag \\
&\leq &\frac{\left\Vert \varphi ^{\prime }\right\Vert _{s}\left\Vert \psi
^{\prime }\right\Vert _{v}\rho ^{2\left( 1-\beta \right) }x^{2k}}{\Gamma
^{2}\left( \alpha \right) }  \notag \\
&&\times \left[ \int_{0}^{x}\int_{0}^{x}\frac{\tau ^{\rho \left( \eta
+1\right) -1}}{\left( x^{\rho }-\tau ^{\rho }\right) ^{1-\alpha }}\frac{%
\gamma ^{\rho \left( \eta +1\right) -1}}{\left( x^{\rho }-\gamma ^{\rho
}\right) ^{1-\alpha }}h\left( \tau \right) h\left( \gamma \right) \left\vert
\tau -\gamma \right\vert d\tau d\gamma \right] .  \label{INQ31}
\end{eqnarray}%
Using lemma (\ref{lem3}), and the inequality (\ref{INQ31}), with the
properties of the modulus, we get%
\begin{eqnarray}
&&2\left\vert \text{ }^{\rho }\mathcal{I}_{\eta ,k}^{\alpha ,\beta }\left(
h\varphi \psi \right) \left( x\right) \text{ }^{\rho }\mathcal{I}_{\eta
,k}^{\alpha ,\beta }h\left( x\right) -\text{ }^{\rho }\mathcal{I}_{\eta
,k}^{\alpha ,\beta }\left( h\psi \right) \left( x\right) \text{ }^{\rho }%
\mathcal{I}_{\eta ,k}^{\alpha ,\beta }\left( h\varphi \right) \left(
x\right) \right\vert  \notag \\
&\leq &\frac{\left\Vert \varphi ^{\prime }\right\Vert _{s}\left\Vert \psi
^{\prime }\right\Vert _{v}\rho ^{2\left( 1-\beta \right) }x^{2k}}{\Gamma
^{2}\left( \alpha \right) }  \notag \\
&&\times \left[ \int_{0}^{x}\int_{0}^{x}\frac{\tau ^{\rho \left( \eta
+1\right) -1}}{\left( x^{\rho }-\tau ^{\rho }\right) ^{1-\alpha }}\frac{%
\gamma ^{\rho \left( \eta +1\right) -1}}{\left( x^{\rho }-\gamma ^{\rho
}\right) ^{1-\alpha }}h\left( \tau \right) h\left( \gamma \right) \left\vert
\tau -\gamma \right\vert d\tau d\gamma \right] .  \label{INQ32}
\end{eqnarray}%
Which proves frist part of (\ref{INQ33}). To prove the second inequality of (%
\ref{INQ33}), we have%
\begin{equation*}
0\leq \tau \leq x,0\leq \gamma \leq x.
\end{equation*}%
Then 
\begin{equation*}
0\leq \left\vert \tau -\gamma \right\vert \leq x.
\end{equation*}%
Hence%
\begin{eqnarray}
&&\frac{\rho ^{2\left( 1-\beta \right) }x^{2k}}{\Gamma ^{2}\left( \alpha
\right) }\int_{0}^{x}\int_{0}^{x}\frac{\tau ^{\rho \left( \eta +1\right) -1}%
}{\left( x^{\rho }-\tau ^{\rho }\right) ^{1-\alpha }}\frac{\gamma ^{\rho
\left( \eta +1\right) -1}}{\left( x^{\rho }-\gamma ^{\rho }\right)
^{1-\alpha }}h\left( \tau \right) h\left( \gamma \right) \left\vert H\left(
\tau ,\gamma \right) \right\vert d\tau d\gamma  \notag \\
&\leq &\frac{\left\Vert \varphi ^{\prime }\right\Vert _{s}\left\Vert \psi
^{\prime }\right\Vert _{v}\rho ^{2\left( 1-\beta \right) }x^{1+2k}}{\Gamma
^{2}\left( \alpha \right) }  \notag \\
&&\times \left[ \int_{0}^{x}\int_{0}^{x}\frac{\tau ^{\rho \left( \eta
+1\right) -1}}{\left( x^{\rho }-\tau ^{\rho }\right) ^{1-\alpha }}\frac{%
\gamma ^{\rho \left( \eta +1\right) -1}}{\left( x^{\rho }-\gamma ^{\rho
}\right) ^{1-\alpha }}h\left( \tau \right) h\left( \gamma \right) d\tau
d\gamma \right] .  \notag \\
&=&\left\Vert \varphi ^{\prime }\right\Vert _{s}\left\Vert \psi ^{\prime
}\right\Vert _{v}x\left( \text{ }^{\rho }\mathcal{I}_{\eta ,k}^{\alpha
,\beta }h\left( x\right) \right) ^{2}.  \label{INQ34}
\end{eqnarray}%
Which proves second inequality of (\ref{INQ33}). Hence, Theorem (\ref{thm4})
is proved.
\end{proof}

Now we give the following theorem with different parameters:

\begin{theorem}
\label{thm6} Let $h$ be a positive function on $\left[ 0,\infty \right) ,$
and let $\varphi ,\psi ,$ be two differentiable functions on $\left[
0,\infty \right) .$ suppose that $\varphi ^{\prime }\in L_{s}\left( \left[
0,\infty \right) \right) ,\psi ^{\prime }\in L_{v}\left( \left[ 0,\infty
\right) \right) ,s>1,\frac{1}{s}+\frac{1}{v}=1.$\ Then for all $x>0,\alpha
>0,\delta >0,\rho >0,k,\beta ,\lambda ,\eta \in 
\mathbb{R}
,$ we have:%
\begin{eqnarray}
&&^{\rho }\mathcal{I}_{\eta ,k}^{\alpha ,\beta }\left( h\varphi \psi \right)
\left( x\right) \text{ }^{\rho }\mathcal{I}_{\eta ,k}^{\delta ,\lambda
}h\left( x\right) -\text{ }^{\rho }\mathcal{I}_{\eta ,k}^{\delta ,\lambda
}\left( h\psi \right) \left( x\right) \text{ }^{\rho }\mathcal{I}_{\eta
,k}^{\alpha ,\beta }\left( h\varphi \right) \left( x\right)   \notag \\
&&\text{ }^{\rho }\mathcal{I}_{\eta ,k}^{\delta ,\lambda }\left( h\varphi
\right) \left( x\right) \text{ }^{\rho }\mathcal{I}_{\eta ,k}^{\alpha ,\beta
}\left( h\psi \right) \left( x\right) +\text{ }^{\rho }\mathcal{I}_{\eta
,k}^{\delta ,\lambda }\left( h\varphi \psi \right) \left( x\right) \text{ }%
^{\rho }\mathcal{I}_{\eta ,k}^{\alpha ,\beta }h\left( x\right)   \notag \\
&\leq &\frac{\left\Vert \varphi ^{\prime }\right\Vert _{s}\left\Vert \psi
^{\prime }\right\Vert _{v}\rho ^{2-\left( \beta -\lambda \right) }x^{2k}}{%
\Gamma \left( \alpha \right) \Gamma \left( \delta \right) }  \notag \\
&&\times \left[ \int_{0}^{x}\int_{0}^{x}\frac{\tau ^{\rho \left( \eta
+1\right) -1}}{\left( x^{\rho }-\tau ^{\rho }\right) ^{1-\alpha }}\frac{%
\gamma ^{\rho \left( \eta +1\right) -1}}{\left( x^{\rho }-\gamma ^{\rho
}\right) ^{1-\delta }}h\left( \tau \right) h\left( \gamma \right) \left\vert
\tau -\gamma \right\vert d\tau d\gamma \right]   \notag \\
&\leq &\left\Vert \varphi ^{\prime }\right\Vert _{s}\left\Vert \psi ^{\prime
}\right\Vert _{v}x\left( \text{ }^{\rho }\mathcal{I}_{\eta ,k}^{\alpha
,\beta }h\left( x\right) \text{ }^{\rho }\mathcal{I}_{\eta ,k}^{\delta
,\lambda }h\left( x\right) \right) .  \label{INQ40}
\end{eqnarray}
\end{theorem}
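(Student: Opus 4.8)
The plan is to follow the two-step scheme of Lemma~\ref{lem3} and Theorem~\ref{thm4}, now carrying the two independent parameter pairs $(\alpha,\beta)$ and $(\delta,\lambda)$ through the computation. First I would prove the mixed-parameter analogue of the identity~(\ref{inq24}). With $H(\tau,\gamma)=(\varphi(\tau)-\varphi(\gamma))(\psi(\tau)-\psi(\gamma))$ as in~(\ref{INQ25}), multiply by $\frac{\rho^{1-\beta}x^{k}}{\Gamma(\alpha)}\frac{\tau^{\rho(\eta+1)-1}h(\tau)}{(x^{\rho}-\tau^{\rho})^{1-\alpha}}$ and integrate in $\tau$ over $(0,x)$; exactly as in the step leading to~(\ref{INQ27}) this gives
\[
\frac{\rho^{1-\beta}x^{k}}{\Gamma(\alpha)}\int_{0}^{x}\frac{\tau^{\rho(\eta+1)-1}}{(x^{\rho}-\tau^{\rho})^{1-\alpha}}h(\tau)H(\tau,\gamma)\,d\tau
= {}^{\rho}\mathcal{I}_{\eta,k}^{\alpha,\beta}(h\varphi\psi)(x)-\psi(\gamma)\,{}^{\rho}\mathcal{I}_{\eta,k}^{\alpha,\beta}(h\varphi)(x)-\varphi(\gamma)\,{}^{\rho}\mathcal{I}_{\eta,k}^{\alpha,\beta}(h\psi)(x)+\varphi(\gamma)\psi(\gamma)\,{}^{\rho}\mathcal{I}_{\eta,k}^{\alpha,\beta}h(x).
\]
Then multiply this identity by $\frac{\rho^{1-\lambda}x^{k}}{\Gamma(\delta)}\frac{\gamma^{\rho(\eta+1)-1}h(\gamma)}{(x^{\rho}-\gamma^{\rho})^{1-\delta}}$ and integrate in $\gamma$ over $(0,x)$. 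Because the two parameter pairs differ, no symmetrisation takes place: the four products on the right stay distinct and reproduce the left-hand side of~(\ref{INQ40}), so in particular there is no factor $2$ as there was in~(\ref{inq24}); meanwhile the left-hand side becomes the double integral over $(0,x)\times(0,x)$ of $h(\tau)h(\gamma)H(\tau,\gamma)$ against the product kernel, with prefactor $\rho^{1-\beta}\rho^{1-\lambda}x^{2k}/(\Gamma(\alpha)\Gamma(\delta))$.

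Second, I would reuse the pointwise bound from Theorem~\ref{thm4}: writing $H(\tau,\gamma)=\int_{\tau}^{\gamma}\int_{\tau}^{\gamma}\varphi'(t)\psi'(r)\,dt\,dr$ and applying the Holder inequality for double integrals gives $|H(\tau,\gamma)|\le|\tau-\gamma|\,\bigl|\int_{\tau}^{\gamma}|\varphi'(t)|^{s}dt\bigr|^{1/s}\bigl|\int_{\tau}^{\gamma}|\psi'(r)|^{v}dr\bigr|^{1/v}$, which is~(\ref{INQ28}). Inserting this into the identity from the first step, passing to absolute values, and then applying the Holder inequality a second time to the $(\tau,\gamma)$-double integral --- splitting the nonnegative weight $\frac{\tau^{\rho(\eta+1)-1}}{(x^{\rho}-\tau^{\rho})^{1-\alpha}}\frac{\gamma^{\rho(\eta+1)-1}}{(x^{\rho}-\gamma^{\rho})^{1-\delta}}h(\tau)h(\gamma)|\tau-\gamma|$ into its $1/s$-th and $1/v$-th powers and pairing each with the matching derivative integral, exactly as in~(\ref{INQ29})--(\ref{INQ30}) --- and then using $\bigl|\int_{\tau}^{\gamma}|\varphi'|^{s}\bigr|\le\|\varphi'\|_{s}^{s}$ together with~(\ref{INQ38}), yields the first inequality of~(\ref{INQ40}). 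The point worth checking is that the two distinct exponents $1-\alpha$ and $1-\delta$ in the kernel factors are irrelevant to this Holder step, which only separates the $\varphi'$- and $\psi'$-dependence and treats the whole weight as a single nonnegative function; hence the argument of Theorem~\ref{thm4} transfers with purely cosmetic changes.

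Finally, for the second inequality of~(\ref{INQ40}) I would use $0\le|\tau-\gamma|\le x$ on $(0,x)\times(0,x)$ to replace $|\tau-\gamma|$ by $x$ in the double integral; the integrand then factors in $\tau$ and $\gamma$, so the double integral --- with its prefactor and the extracted $x$ reinstated --- becomes $x\,\bigl({}^{\rho}\mathcal{I}_{\eta,k}^{\alpha,\beta}h(x)\bigr)\bigl({}^{\rho}\mathcal{I}_{\eta,k}^{\delta,\lambda}h(x)\bigr)$, which with the constant $\|\varphi'\|_{s}\|\psi'\|_{v}$ is the right-hand side of~(\ref{INQ40}); this is the analogue of the passage culminating in~(\ref{INQ34}). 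I do not anticipate a genuine obstacle: the proof is a careful re-run of Lemma~\ref{lem3} and Theorem~\ref{thm4}, and the only real care needed is in the first step --- writing the identity with the correct four distinct products, the $(\alpha,\beta)$-operator acting in $\tau$ and the $(\delta,\lambda)$-operator in $\gamma$, rather than the symmetrised two-term, factor-$2$ form of~(\ref{inq24}).
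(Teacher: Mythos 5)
Your proposal is correct and follows essentially the same route as the paper: derive the mixed-parameter identity by integrating (\ref{INQ27}) against the $(\delta,\lambda)$-kernel in $\gamma$ (the paper's (\ref{INQ35}), with the four distinct products and no factor $2$), then apply the pointwise bound (\ref{INQ28}), a second Holder step, and the norm bounds (\ref{INQ38}), and finally use $|\tau-\gamma|\le x$ to factor the double integral for the last inequality. The only difference is that you spell out the second inequality, which the paper dismisses with ``can be proved similarly.''
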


\begin{proof}
In lemma (\ref{lem3}), multiplying both sides of (\ref{INQ27}) by $\frac{%
\rho ^{1-\lambda }x^{k}}{\Gamma \left( \delta \right) }\frac{\gamma ^{\rho
\left( \eta +1\right) -1}h\left( \gamma \right) }{\left( x^{\rho }-\gamma
^{\rho }\right) ^{1-\delta }},$ where $\gamma \in \left( 0,x\right) ,$and
integrating the resulting identity with respect to $\gamma $ from $0$ to $x$%
, we can write%
\begin{eqnarray}
&&\frac{\rho ^{2-\left( \beta -\lambda \right) }x^{2k}}{\Gamma \left( \alpha
\right) \Gamma \left( \delta \right) }\int_{0}^{x}\int_{0}^{x}\frac{\tau
^{\rho \left( \eta +1\right) -1}}{\left( x^{\rho }-\tau ^{\rho }\right)
^{1-\alpha }}\frac{\gamma ^{\rho \left( \eta +1\right) -1}}{\left( x^{\rho
}-\gamma ^{\rho }\right) ^{1-\delta }}h\left( \gamma \right) h\left( \tau
\right) H\left( \tau ,\gamma \right) d\tau   \notag \\
&=&\text{ }^{\rho }\mathcal{I}_{\eta ,k}^{\alpha ,\beta }\left( h\varphi
\psi \right) \left( x\right) \text{ }^{\rho }\mathcal{I}_{\eta ,k}^{\delta
,\lambda }h\left( x\right) -\text{ }^{\rho }\mathcal{I}_{\eta ,k}^{\delta
,\lambda }\left( h\psi \right) \left( x\right) \text{ }^{\rho }\mathcal{I}%
_{\eta ,k}^{\alpha ,\beta }\left( h\varphi \right) \left( x\right)   \notag
\\
&&-\text{ }^{\rho }\mathcal{I}_{\eta ,k}^{\delta ,\lambda }\left( h\varphi
\right) \left( x\right) \text{ }^{\rho }\mathcal{I}_{\eta ,k}^{\alpha ,\beta
}\left( h\psi \right) \left( x\right) +\text{ }^{\rho }\mathcal{I}_{\eta
,k}^{\delta ,\lambda }\left( h\varphi \psi \right) \left( x\right) \text{ }%
^{\rho }\mathcal{I}_{\eta ,k}^{\alpha ,\beta }h\left( x\right) .
\label{INQ35}
\end{eqnarray}%
Using inequality (\ref{INQ28}) in left-hand side of (\ref{INQ35}), we can
write%
\begin{eqnarray}
&&\frac{\rho ^{2-\left( \beta -\lambda \right) }x^{2k}}{\Gamma \left( \alpha
\right) \Gamma \left( \delta \right) }\int_{0}^{x}\int_{0}^{x}\frac{\tau
^{\rho \left( \eta +1\right) -1}}{\left( x^{\rho }-\tau ^{\rho }\right)
^{1-\alpha }}\frac{\gamma ^{\rho \left( \eta +1\right) -1}}{\left( x^{\rho
}-\gamma ^{\rho }\right) ^{1-\delta }}h\left( \gamma \right) h\left( \tau
\right) \left\vert H\left( \tau ,\gamma \right) \right\vert d\tau   \notag \\
&\leq &\frac{\rho ^{2-\left( \beta -\lambda \right) }x^{2k}}{\Gamma \left(
\alpha \right) \Gamma \left( \delta \right) }\int_{0}^{x}\int_{0}^{x}\frac{%
\tau ^{\rho \left( \eta +1\right) -1}}{\left( x^{\rho }-\tau ^{\rho }\right)
^{1-\alpha }}\frac{\gamma ^{\rho \left( \eta +1\right) -1}}{\left( x^{\rho
}-\gamma ^{\rho }\right) ^{1-\delta }}h\left( \gamma \right) h\left( \tau
\right)   \notag \\
&&\times \left\vert \tau -\gamma \right\vert \left\vert \int_{\tau }^{\gamma
}\left\vert \varphi ^{\prime }\left( t\right) \right\vert ^{s}dt\right\vert
^{\frac{1}{s}}\left\vert \int_{\tau }^{\gamma }\left\vert \psi ^{\prime
}\left( r\right) \right\vert ^{v}dr\right\vert ^{\frac{1}{v}}d\tau d\gamma .
\label{INQ36}
\end{eqnarray}%
By applying Holder inequality for double integral to above inequality, we
obtain:%
\begin{eqnarray}
&&\frac{\rho ^{2-\left( \beta -\lambda \right) }x^{2k}}{\Gamma \left( \alpha
\right) \Gamma \left( \delta \right) }\int_{0}^{x}\int_{0}^{x}\frac{\tau
^{\rho \left( \eta +1\right) -1}}{\left( x^{\rho }-\tau ^{\rho }\right)
^{1-\alpha }}\frac{\gamma ^{\rho \left( \eta +1\right) -1}}{\left( x^{\rho
}-\gamma ^{\rho }\right) ^{1-\delta }}h\left( \tau \right) h\left( \gamma
\right) \left\vert H\left( \tau ,\gamma \right) \right\vert d\tau d\gamma  
\notag \\
&\leq &\left[ \frac{\rho ^{2-\left( \beta -\lambda \right) }x^{2k}}{\Gamma
\left( \alpha \right) \Gamma \left( \delta \right) }\int_{0}^{x}\int_{0}^{x}%
\frac{\tau ^{\rho \left( \eta +1\right) -1}}{\left( x^{\rho }-\tau ^{\rho
}\right) ^{1-\alpha }}\frac{\gamma ^{\rho \left( \eta +1\right) -1}}{\left(
x^{\rho }-\gamma ^{\rho }\right) ^{1-\delta }}h\left( \tau \right) h\left(
\gamma \right) \left\vert \tau -\gamma \right\vert \right.   \notag \\
&&\times \left. \left\vert \int_{\tau }^{\gamma }\left\vert \varphi ^{\prime
}\left( t\right) \right\vert ^{s}dt\right\vert d\tau d\gamma \right] ^{\frac{%
1}{s}}  \notag \\
&&\times \left[ \frac{\rho ^{2-\left( \beta -\lambda \right) }x^{2k}}{\Gamma
\left( \alpha \right) \Gamma \left( \delta \right) }\int_{0}^{x}\int_{0}^{x}%
\frac{\tau ^{\rho \left( \eta +1\right) -1}}{\left( x^{\rho }-\tau ^{\rho
}\right) ^{1-\alpha }}\frac{\gamma ^{\rho \left( \eta +1\right) -1}}{\left(
x^{\rho }-\gamma ^{\rho }\right) ^{1-\delta }}h\left( \tau \right) h\left(
\gamma \right) \left\vert \tau -\gamma \right\vert \right.   \notag \\
&&\left. \left\vert \int_{\tau }^{\gamma }\left\vert \psi ^{\prime }\left(
r\right) \right\vert ^{v}dr\right\vert d\tau d\gamma \right] ^{\frac{1}{v}}.
\label{INQ37}
\end{eqnarray}%
Using (\ref{INQ38}) and (\ref{INQ37}), we can write%
\begin{eqnarray}
&&\frac{\rho ^{2-\left( \beta -\lambda \right) }x^{2k}}{\Gamma \left( \alpha
\right) \Gamma \left( \delta \right) }\int_{0}^{x}\int_{0}^{x}\frac{\tau
^{\rho \left( \eta +1\right) -1}}{\left( x^{\rho }-\tau ^{\rho }\right)
^{1-\alpha }}\frac{\gamma ^{\rho \left( \eta +1\right) -1}}{\left( x^{\rho
}-\gamma ^{\rho }\right) ^{1-\delta }}h\left( \tau \right) h\left( \gamma
\right) \left\vert H\left( \tau ,\gamma \right) \right\vert d\tau d\gamma  
\notag \\
&\leq &\frac{\left\Vert \varphi ^{\prime }\right\Vert _{s}\left\Vert \psi
^{\prime }\right\Vert _{v}\rho ^{2-\left( \beta -\lambda \right) }x^{2k}}{%
\Gamma \left( \alpha \right) \Gamma \left( \delta \right) }  \notag \\
&&\times \left[ \int_{0}^{x}\int_{0}^{x}\frac{\tau ^{\rho \left( \eta
+1\right) -1}}{\left( x^{\rho }-\tau ^{\rho }\right) ^{1-\alpha }}\frac{%
\gamma ^{\rho \left( \eta +1\right) -1}}{\left( x^{\rho }-\gamma ^{\rho
}\right) ^{1-\delta }}h\left( \tau \right) h\left( \gamma \right) \left\vert
\tau -\gamma \right\vert d\tau d\gamma \right] .  \label{INQ39}
\end{eqnarray}%
Using (\ref{INQ35}) and (\ref{INQ39}), with the properties of the modulus,
we get%
\begin{eqnarray}
&&^{\rho }\mathcal{I}_{\eta ,k}^{\alpha ,\beta }\left( h\varphi \psi \right)
\left( x\right) \text{ }^{\rho }\mathcal{I}_{\eta ,k}^{\delta ,\lambda
}h\left( x\right) -\text{ }^{\rho }\mathcal{I}_{\eta ,k}^{\delta ,\lambda
}\left( h\psi \right) \left( x\right) \text{ }^{\rho }\mathcal{I}_{\eta
,k}^{\alpha ,\beta }\left( h\varphi \right) \left( x\right)   \notag \\
&&\text{ }^{\rho }\mathcal{I}_{\eta ,k}^{\delta ,\lambda }\left( h\varphi
\right) \left( x\right) \text{ }^{\rho }\mathcal{I}_{\eta ,k}^{\alpha ,\beta
}\left( h\psi \right) \left( x\right) +\text{ }^{\rho }\mathcal{I}_{\eta
,k}^{\delta ,\lambda }\left( h\varphi \psi \right) \left( x\right) \text{ }%
^{\rho }\mathcal{I}_{\eta ,k}^{\alpha ,\beta }h\left( x\right)   \notag \\
&\leq &\frac{\left\Vert \varphi ^{\prime }\right\Vert _{s}\left\Vert \psi
^{\prime }\right\Vert _{v}\rho ^{2-\left( \beta -\lambda \right) }x^{2k}}{%
\Gamma \left( \alpha \right) \Gamma \left( \delta \right) }  \notag \\
&&\times \left[ \int_{0}^{x}\int_{0}^{x}\frac{\tau ^{\rho \left( \eta
+1\right) -1}}{\left( x^{\rho }-\tau ^{\rho }\right) ^{1-\alpha }}\frac{%
\gamma ^{\rho \left( \eta +1\right) -1}}{\left( x^{\rho }-\gamma ^{\rho
}\right) ^{1-\delta }}h\left( \tau \right) h\left( \gamma \right) \left\vert
\tau -\gamma \right\vert d\tau d\gamma \right] .  \label{INQ41}
\end{eqnarray}%
Which proves first inequality of (\ref{INQ4}). Second inequality can be
proved similarly.
\end{proof}

\begin{remark}
If $k=0,\eta =0$ and taking $\rho \rightarrow 1$ in the theorems (\ref{thm4}%
) and (\ref{thm6}), we get  
\begin{eqnarray*}
&&\emph{2}\left\vert \mathcal{I}^{\alpha }h\left( x\right) \mathcal{I}%
^{\alpha }\left( h\varphi \psi \right) \left( x\right) -\mathcal{I}^{\alpha
}\left( h\varphi \right) \left( x\right) \mathcal{I}^{\alpha }\left( h\psi
\right) \left( x\right) \right\vert  \\
&\leq &\frac{\left\Vert \varphi ^{\prime }\right\Vert _{s}\left\Vert \psi
^{\prime }\right\Vert _{v}}{\Gamma ^{2}\left( \alpha \right) }%
\int_{a}^{b}\int_{a}^{b}\left( x-\tau \right) ^{\alpha -1}\left( x-\gamma
\right) ^{\alpha -1}\left\vert \tau -\gamma \right\vert h\left( \tau \right)
h\left( \gamma \right) d\tau d\gamma  \\
&\leq &\left\Vert \varphi ^{\prime }\right\Vert _{s}\left\Vert \psi ^{\prime
}\right\Vert _{v}x\left( \mathcal{I}^{\alpha }h\left( x\right) \right) ^{2}
\end{eqnarray*}%
and%
\begin{eqnarray*}
&&\mathcal{I}^{\alpha }\left( h\varphi \psi \right) \left( x\right) \mathcal{%
I}^{\delta }h\left( x\right) -\mathcal{I}^{\delta }\left( h\psi \right)
\left( x\right) \mathcal{I}^{\alpha }\left( h\varphi \right) \left( x\right) 
\\
&&\mathcal{I}^{\delta }\left( h\varphi \right) \left( x\right) \mathcal{I}%
^{\alpha }\left( h\psi \right) \left( x\right) +\mathcal{I}^{\delta }\left(
h\varphi \psi \right) \left( x\right) \mathcal{I}^{\alpha }h\left( x\right) 
\\
&\leq &\frac{\left\Vert \varphi ^{\prime }\right\Vert _{s}\left\Vert \psi
^{\prime }\right\Vert _{v}}{\Gamma \left( \alpha \right) \Gamma \left(
\delta \right) }\int_{0}^{x}\int_{0}^{x}\left( x-\tau \right) ^{\alpha
-1}\left( x-\gamma \right) ^{\delta -1}h\left( \tau \right) h\left( \gamma
\right) \left\vert \tau -\gamma \right\vert d\tau d\gamma  \\
&&\leq \left\Vert \varphi ^{\prime }\right\Vert _{s}\left\Vert \psi ^{\prime
}\right\Vert _{v}x\left( \mathcal{I}^{\alpha }h\left( x\right) \mathcal{I}%
^{\delta }h\left( x\right) \right) .
\end{eqnarray*}%
respectively, as in (see \cite{rr}). Similarly we can get the remaining five
cases of generalized fractional integral mentioned at the preliminaries.
\end{remark}

\bigskip

Department of Mathematics Dr. Babasaheb Ambedkar Marathwada University
Aurangabad-431 004 India.

E-mail: tariq10011@gmail.com.

\bigskip

Department of Mathematics Dr. Babasaheb Ambedkar Marathwada University
Aurangabad-431 004 India.

E-mail: pachpatte@gmail.com.

\end{document}